\documentclass{article}
\usepackage{hyperref}
\usepackage{graphicx}
\usepackage[all]{xy} 
\usepackage{amsmath}
\usepackage{amsthm}
\usepackage{array}  
\usepackage{amssymb}
\usepackage{calc}
\usepackage{supertabular}
\usepackage{longtable}
\usepackage{txfonts}
\usepackage{textcomp}
\usepackage{colortbl}
\usepackage{tabularx} 
\usepackage[table]{xcolor}
\usepackage{lscape}
\usepackage{rotating}
\usepackage{tensor}
\usepackage{arydshln} 
\usepackage{bbm} 
\usepackage{appendix} 
\usepackage{placeins} 
\usepackage[T1]{fontenc} 
\usepackage{titletoc}
\usepackage{pgf}
\usepackage{tikz}
\usetikzlibrary{matrix,arrows}
\usetikzlibrary{patterns}
\usetikzlibrary{shapes,decorations,shadows}
\usetikzlibrary{decorations.pathmorphing}
\usetikzlibrary{decorations.shapes}

\DeclareMathOperator{\Cone}{Cone} 
\DeclareMathOperator{\add}{add}  
\DeclareMathOperator{\Mod}{Mod} 
 
\DeclareMathOperator{\Hom}{Hom} 
\DeclareMathOperator{\rad}{rad} 
\DeclareMathOperator{\inj}{inj} 
\DeclareMathOperator{\tor}{tor}

\DeclareMathOperator{\Proj}{Proj} 
\DeclareMathOperator{\im}{im} 
\DeclareMathOperator{\id}{id}

\DeclareMathOperator{\rep}{rep}

\DeclareMathOperator{\GL}{GL}

\DeclareMathOperator{\codim}{codim} 
\DeclareMathOperator{\Spec}{Spec} 
 
\DeclareMathOperator{\sst}{sst}

\DeclareMathOperator{\hb}{hb} 
\DeclareMathOperator{\hd}{hd} 
\DeclareMathOperator{\hc}{hc} 
\DeclareMathOperator{\hs}{hs} 
\DeclareMathOperator{\ch}{ch} 
\DeclareMathOperator{\vc}{vc} 
\DeclareMathOperator{\vs}{vs} 
\DeclareMathOperator{\cv}{cv} 
\DeclareMathOperator{\vb}{vb} 
\DeclareMathOperator{\vd}{vd} 

\definecolor{darkred}{rgb}{0.5,0,0}
\definecolor{llllllllightblue}{rgb}{0.95,0.95,1.2}
\definecolor{lllllllightblue}{rgb}{0.9,0.9,1}
\definecolor{llllllightblue}{rgb}{0.85,0.85,0.95}
\definecolor{lllllightblue}{rgb}{0.8,0.8,0.9}
\definecolor{llllightblue}{rgb}{0.75,0.75,0.85}
\definecolor{lllightblue}{rgb}{0.7,0.7,0.8}
\definecolor{llightblue}{rgb}{0.65,0.65,0.75}
\definecolor{lightblue}{rgb}{0.6,0.6,0.7}
\definecolor{ightblue}{rgb}{0.55,0.55,0.65}
\definecolor{ghtblue}{rgb}{0.5,0.5,0.6}
\definecolor{htblue}{rgb}{0.45,0.45,0.55}
\definecolor{tblue}{rgb}{0.4,0.4,0.5}
\definecolor{blue}{rgb}{0.35,0.35,0.45}
\definecolor{lue}{rgb}{0.3,0.3,0.4}
\definecolor{lblue}{rgb}{0.3,0.0,4.4}
\definecolor{darkblue}{rgb}{0,0.0,0.5}
\definecolor{lightgrey}{rgb}{0.8,0.8,0.8}
\newcommand{\Det}{{\det}}
\newcommand*{\punkte}{\dots\unkern}

\newcommand{\Fa}{\mathcal{F}} 
\newcommand{\Pa}{\mathcal{P}} 
\newcommand{\Ha}{\mathcal{H}} 
 
\newcommand{\Q}{\mathcal{Q}} 
\newcommand{\Orb}{\mathcal{O}} 
\newcommand{\N}{\mathcal{N}} 
 
\newcommand{\quot}{/\!\!/}

\newcommand{\dimv}{\underline{\dim}}
\newcommand{\dfp}{\underline{d}_{P}} 
\newcommand{\df}{\underline{d}} 
\newcommand{\dfs}{\underline{d}_B}

\newtheorem{theorem}{Theorem}[section]
\newtheorem{lemma}[theorem]{Lemma}
\newtheorem{definition}[theorem]{Definition}
\newtheorem{proposition}[theorem]{Proposition}
\newtheorem{corollary}[theorem]{Corollary}

\newtheorem{example}[theorem]{Example}

\bibliographystyle{plain}
\setlength{\parindent}{0em}
\setcounter{secnumdepth}{2} 
\begin{document}
\parindent0pt
\title{\bf Non-reductive conjugation on the nilpotent cone}

\author{Magdalena Boos\\ Fachbereich C - Mathematik\\ Bergische Universit\"at Wuppertal\\ D - 42097 Wuppertal\\ boos@math.uni-wuppertal.de}
\date{}
\maketitle

\begin{abstract}
We consider the conjugation-action of an arbitrary upper-block parabolic subgroup of $\GL_n(\mathbf{C})$, especially of the Borel subgroup $B$ and of the standard unipotent subgroup $U$ of the latter on the nilpotent cone of complex nilpotent matrices.  We obtain generic normal forms of the orbits and describe generating (semi-) invariants for the Borel semi-invariant ring as well as for the $U$-invariant ring. The latter is described in more detail in terms of algebraic quotients by a special toric variety closely related. The study of a GIT-quotient for the Borel-action is initiated.
\end{abstract}
\section{Introduction}\label{intro}
The "horizontal" study of algebraic group actions on affine varieties by parametric families of orbits and quotients are a natural topic in algebraic Lie theory.\\[1ex]
In particular, the study of the adjoint action of a reductive algebraic group on its Lie algebra and numerous variants thereof yield various examples. One of these is the study  of complex (nilpotent) square matrices up to isomorphism.\\[1ex]
 Algebraic group actions of reductive groups have particularly been discussed elaborately in connection with orbit spaces and more generally algebraic quotients, even though their application to concrete examples is far from being trivial. In case of a non-reductive group, even most of these results fail to hold true immediately.\\[1ex]
For example, Hilbert's Theorem \cite{Hi} yields that for reductive groups, the invariant ring is finitely generated; and a criterion for algebraic quotients is valid \cite{Kr}. In 1958, though, M. Nagata \cite{Na1} constructed a counterexample of a not finitely generated invariant ring corresponding to a non-reductive algebraic group action, which answered Hilbert's fourteenth problem in the negative.\\[1ex]
The corresponding invariant rings of algebraic actions of unipotent subgroups that are induced by reductive groups are always finitely generated \cite{Kr}, though.\\[1ex]
 We turn our main attention towards algebraic non-reductive group actions that are induced by the conjugation action of the general linear group $\GL_n$ over $\mathbf{C}$. For example, the standard parabolic subgroups $P$ (and, therefore, the standard Borel subgroup $B$) and the standard unipotent subgroup $U$ of $\GL_n$ are not reductive. 
It suggests itself to consider their action on the variety $\N$ of complex nilpotent matrices of square size $n$, also known as the \textit{nilpotent cone}, via conjugation which we discuss in this work.\\[1ex]
We begin by providing a short introduction of the theoretical background in Section \ref{theor}.\\[1ex]
In Section \ref{transsect}, an associated fibre bundle is proved, which yields a translation of the classification problem of the $P$-orbits in $\N$ to  the description of certain isomorphism classes of representations of a finite-dimensional algebra. The translation will be used later on to study  (algebraic) quotiens of the above mentioned group actions.\\[1ex]
In \cite{Hal,BoRe}, a generic $B$-normal form on $\N$ is introduced which we generalize to arbitrary upper-block parabolic subgroups in Section \ref{gnfsect}. The generalization is quite natural and extends the before mentioned result.\\[1ex]
In Section \ref{generation}, we describe $B$-semi-invariants and prove that these, in fact, generate the ring of all $B$-semi-invariants. As a direct consequence, we are able to find $U$-invariants that span the $U$-invariant ring.\\[1ex]
 The latter will be made use of to discuss the $U$-invariant ring in more detail in Section \ref{Uquot} by proving a quotient criterion and discussing a toric variety closely related to the algebraic quotient of $\N$ by $U$.\\[1ex]
Finally, we initiate the study of GIT-quotients for the Borel-action in Section \ref{Borelquot}.\\[1ex] 
The cases $n=2$ and $n=3$ are discussed in all detail, that is, the $B$-semi-invariant ring and it's quotient as well as the $U$-invariant ring and its quotient are written down explicitly in Sections \ref{Uquot} and \ref{Borelquot}.\\[1ex]
The results stated in this article represent a part of the outcome of the dissertation \cite{B1}.\\[1ex]
{\bf Acknowledgments:} The author would like to thank M. Reineke for various valuable discussions concerning the methods and results of this work. Furthermore,  A. Melnikov, K. Bongartz and R. Tange are being thanked for inspirational thoughts and helpful remarks.\\[1ex]
The published version of this article is \cite{B3}.
\section{Theoretical background}\label{theor}
Let us denote by $K\coloneqq \mathbf{C}$ the field of complex numbers and by $\GL_n\coloneqq\GL_n(K)$ the general linear group for a fixed integer $n\in\textbf{N}$ regarded as an affine variety. 

\subsection{(Semi-) Invariants and quotients}\label{generic}
We start by providing basic knowledge about \mbox{(semi-)} invariants and quotients \cite{Kr,Mu}. Let $G$ be a linear algebraic group and let $X$ be an affine $G$-variety. We denote by  $X(G)$ the \textit{character group} of $G$; a global section $f\in K[X]$ is called a \textit{$G$-semi-invariant of weight $\chi\in X(G)$} if  $f(g.x)=\chi(g)\cdot f(x)$ for all $x\in X$ and $g\in G$.  \\[1ex]
Let us denote the $\chi$-semi-invariant ring by
\[K[X]^{G}_{\chi}\coloneqq \bigoplus_{n\geq 0}K[X]^{G, n\chi},\]
which is a subring of $K[X]$ and naturally $\textbf{N}$-graded by the sets $K[X]^{G, n\chi}$, that is, by the semi-invariants of weight $n\chi$ (and of degree $n$). The semi-invariant ring corresponding to all characters is denoted by
\[K[X]^{G}_{*}\coloneqq \bigoplus_{\chi\in X(G)}K[X]^{G}_{\chi}.\]
A global section $f\in K[X]$ is called a  \textit{$G$-invariant} if $f(g.x)=f(x)$ for all $x\in X$ and  $g\in G$; the corresponding $G$-invariant ring is denoted by $K[X]^G$.
If the group $G$ is \textit{reductive}, that is, if every linear representation of $G$ can be decomposed into a direct sum of irreducible representations, D. Hilbert showed  that the invariant ring is finitely generated (see \cite{Hi}), even though it can be a problem of large difficulty to find generating invariants.\\[1ex] 
Let $X'$  be yet another affine $G$-variety and let $Y$ be an affine variety.\\[1ex]
A $G$-invariant morphism $\pi\colon X\rightarrow Y=:X\quot G$ is called an \textit{algebraic $G$-quotient of $X$} if 
it fulfills the universal property that for every $G$-invariant morphism $f\colon X\rightarrow Z$, there exists a unique morphism $\hat{f}\colon Y\rightarrow Z$, such that $f=\hat{f}\circ \pi$. If $K[X]^G$ is finitely generated, the variety $\Spec K[X]^G= X\quot G$ induces an algebraic quotient. Each fibre of an algebraic quotient contains exactly one closed orbit, thus, these closed orbits are parametrized.\\[1ex]
In order to calculate an algebraic $G$-quotient of an affine variety, the following criterion (see \cite[II.3.4]{Kr}) can be helpful.
\begin{theorem}\label{criterion}
Let $G$ be a reductive group and let $\pi\colon X\rightarrow Y$ be a $G$-invariant morphism of varieties. If 
\begin{enumerate}
 \item $Y$ is normal,
\item $\codim_Y(\overline{Y\backslash \pi(X)})\geq 2$ (or $\pi$ is surjective if $\dim Y=1$) and
\item on a non-empty open subset $Y_0\subseteq Y$ the fibre $\pi^{-1}(y)$ contains exactly one closed orbit for each $y\in Y_0$,
\end{enumerate}
then $\pi$ is an algebraic $G$-quotient of $X$.
\end{theorem}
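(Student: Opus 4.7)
The plan is to compare $\pi$ to the canonical quotient produced by Hilbert's theorem and to show the comparison morphism is an isomorphism using the three hypotheses.

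First, since $G$ is reductive, Hilbert's theorem gives that $K[X]^G$ is finitely generated, so the affine scheme $X\quot G := \Spec K[X]^G$ exists, together with the canonical morphism $p\colon X\to X\quot G$ induced by inclusion $K[X]^G\hookrightarrow K[X]$. Standard properties of $p$ that I would quote from \cite{Kr}: it is surjective, its fibres are exactly the closed sets of the form $\overline{G.x}$ that meet each $G$-orbit closure, and the points of $X\quot G$ are in bijection with the closed $G$-orbits in $X$. Because $\pi$ is $G$-invariant, $\pi^\ast K[Y]\subseteq K[X]^G$, and the universal property of $p$ produces a unique morphism $\phi\colon X\quot G\to Y$ with $\pi=\phi\circ p$. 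The task reduces to showing that $\phi$ is an isomorphism, for then $\pi$ inherits the universal property of an algebraic quotient.

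Next I would use condition (3) to show that $\phi$ is birational. The image of $\phi$ equals $\pi(X)$, since $p$ is surjective. Over the open set $Y_0$, each fibre of $\pi$ meets exactly one closed $G$-orbit, so via the bijection between closed $G$-orbits and points of $X\quot G$ the map $\phi$ induces a bijection $\phi^{-1}(Y_0)\to Y_0$ on points. Passing to a smaller open subset where the differential is also well-behaved, or simply invoking generic smoothness together with normality of $Y$, yields that $\phi$ restricts to an isomorphism between non-empty open subsets. Thus $\phi$ is birational.

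Finally I would invoke conditions (1) and (2) to promote the birational map to an isomorphism. The inverse $\phi^{-1}$ is a rational map $Y\dashrightarrow X\quot G$ regular on the open set $\pi(X)\cap Y_0$. Writing $X\quot G$ as an affine variety embedded in some $\mathbf{A}^N$, the components of $\phi^{-1}$ are rational functions on $Y$ regular outside the closed set $\overline{Y\setminus\pi(X)}$ (together with a proper closed subset coming from $Y\setminus Y_0$). By condition (2) this bad locus has codimension $\geq 2$ in $Y$, and since by condition (1) $Y$ is normal, the algebraic Hartogs theorem extends each of these components to a regular function on all of $Y$. Hence $\phi^{-1}$ extends to a genuine morphism $\psi\colon Y\to X\quot G$, and the identities $\psi\circ\phi=\id$ and $\phi\circ\psi=\id$ hold on a dense open subset, hence everywhere. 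Therefore $\phi$ is an isomorphism, as required. In the one-dimensional case codimension $2$ is vacuous, but surjectivity of $\pi$ directly ensures $\phi^{-1}$ is defined outside a finite set, and the same Hartogs-type extension works because normal curves are smooth.

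The main obstacle is the last step: one must be careful that the rational inverse really does extend across $\overline{Y\setminus\pi(X)}$, which relies both on normality of $Y$ and on the codimension hypothesis being at least two (rather than one). Once the setup and the birationality via condition (3) are in place, the extension argument is the essential content.
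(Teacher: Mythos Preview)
The paper does not prove this theorem at all; it is quoted from \cite[II.3.4]{Kr} as a background tool, so there is no in-paper argument to compare against. Your overall strategy---factor $\pi$ through the canonical quotient $p\colon X\to X\quot G$, obtain the comparison morphism $\phi\colon X\quot G\to Y$, use condition~(3) to see that $\phi$ is birational, and then upgrade to an isomorphism using normality and the codimension hypothesis---is the standard one and is essentially how the result is proved in Kraft.

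That said, your execution of the Hartogs step has a real gap. You note that $\phi^{-1}$ is regular on $Y_0$, and then claim the ``bad locus'' has codimension $\geq 2$ ``by condition~(2)''. But condition~(2) only bounds $\overline{Y\setminus\pi(X)}$; nothing in the hypotheses controls the codimension of $Y\setminus Y_0$, which may perfectly well be~$1$. The way around this is not to bound $Y\setminus Y_0$ but to argue directly on pole divisors: take $f\in K[X]^G$, regard it as a rational function on $Y$ via the birational $\phi$, and suppose it has a pole along an irreducible divisor $D\subset Y$. By condition~(2) the open set $Y\setminus\overline{Y\setminus\pi(X)}\subseteq\pi(X)$ meets $D$ densely, and a fibre-dimension count then yields an irreducible codimension-one $D'\subseteq X$ dominating $D$. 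The induced local homomorphism $\mathcal{O}_{Y,D}\to\mathcal{O}_{X,D'}$ forces $\pi^*f=p^*f$ to have a pole along $D'$, contradicting that $p^*f$ is regular on all of $X$. Hence $f\in K[Y]$, so $\phi^*\colon K[Y]\to K[X]^G$ is an isomorphism. Your one-dimensional remark is also off: Hartogs does not help on curves (consider $1/x$ on $\mathbf{A}^1$); instead one uses that a surjective birational morphism of irreducible curves with normal target is an isomorphism, via Zariski's Main Theorem.
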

In case $G$ is not reductive, there are counterexamples of only infinitely generated invariant rings (see \cite{Na1}). For actions of unipotent subgroups which are induced by reductive group actions, however, the following lemma \cite[III.3.2]{Kr} holds true.
\begin{lemma}\label{Uinvfin}
 Let $U$ be a unipotent subgroup of $G$; the action of $G$ restricts to an action of $U$ on $X$. Then the invariant ring $K[X]^U$ is finitely generated as a $K$-algebra.
\end{lemma}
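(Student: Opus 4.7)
The plan is to prove the lemma via the classical transfer principle (Hadžiev--Grosshans), which reduces finite generation of $U$-invariants to a finite generation statement for a reductive group action, where Hilbert's theorem applies. So the strategy is to realise $K[X]^{U}$ as the $G$-invariants of a larger, but still finitely generated, $K$-algebra carrying a natural $G$-action.

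Concretely, I would consider the product $X \times G$ equipped with two commuting actions: the group $G$ acts diagonally, from the left on itself and on $X$, while $U$ acts on $X \times G$ only on the second factor by right translation, $u\cdot(x,g) = (x,gu^{-1})$. Because $G$ acts freely on itself by left translation, the morphism $(x,g)\mapsto g^{-1}x$ induces a $U$-equivariant isomorphism $(X\times G)\quot G \cong X$, so that pullback identifies $K[X\times G]^{G} \cong K[X]$ as $U$-algebras. Taking $U$-invariants on both sides gives
\[
K[X]^{U} \;\cong\; K[X\times G]^{G\times U} \;=\; \bigl(K[X]\otimes K[G]^{U}\bigr)^{G},
\]
where in the last equality I used that $U$ acts only on the second factor and $G$ is reductive so that invariants commute with tensor products.

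Now I would invoke the theorem of Hadžiev: for $G$ reductive and $U$ a maximal unipotent subgroup, $K[G/U] = K[G]^{U}$ is a finitely generated $K$-algebra (for a general unipotent $U\subseteq G$, one first enlarges it to a maximal unipotent subgroup; the residual finite-dimensional unipotent quotient is handled separately and does not affect finite generation). Granted this, the algebra $K[X]\otimes K[G]^{U}$ is a finitely generated $K$-algebra on which the reductive group $G$ acts rationally, so by Hilbert's theorem its invariant ring is finitely generated. Combined with the isomorphism displayed above, this yields that $K[X]^{U}$ is a finitely generated $K$-algebra.

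The principal obstacle here is the input result that $K[G]^{U}$ is finitely generated: this is the non-trivial classical theorem of Hadžiev (reproved and extended by Grosshans), whose proof exploits the decomposition of $K[G]$ into isotypic components via the Peter--Weyl/matrix-coefficient construction and the fact that $G/U$ is a quasi-affine variety. Everything else in the argument is formal manipulation of invariants plus Hilbert's theorem for reductive actions, which is already quoted in the excerpt.
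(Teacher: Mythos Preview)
The paper does not give its own proof of this lemma; it is quoted from Kraft's textbook \cite[III.3.2]{Kr}. Your transfer-principle argument is exactly the classical proof that appears there (due essentially to Had\v{z}iev), so on the level of method you are reproducing the cited source rather than offering an alternative.

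There is, however, a genuine gap in your treatment of the general case. Your sentence ``for a general unipotent $U\subseteq G$, one first enlarges it to a maximal unipotent subgroup; the residual finite-dimensional unipotent quotient is handled separately and does not affect finite generation'' does not work. If $U\subset U_{\max}$ then $K[G]^{U_{\max}}\subset K[G]^{U}$, so knowing the smaller ring is finitely generated says nothing directly about the larger one; moreover $U$ need not be normal in $U_{\max}$, so there is no ``residual unipotent quotient'' acting in the way you suggest, and even when there is, passing to invariants of a unipotent group can destroy finite generation. In fact the lemma is \emph{false} for arbitrary unipotent subgroups: Nagata's counterexample is a linear action of a commutative unipotent group $U\cong G_a^{r}$ on some $\mathbf{C}^{N}$, hence the restriction to $U\subset\GL_{N}$ of the standard $\GL_{N}$-action on an affine $\GL_{N}$-variety, yet $K[\mathbf{C}^{N}]^{U}$ is not finitely generated.

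What is actually true, and what the paper uses, is the case where $U$ is a \emph{maximal} unipotent subgroup (the unipotent radical of a Borel). For that case your argument is complete once you invoke Had\v{z}iev's theorem that $K[G]^{U_{\max}}$ is finitely generated; the remaining steps (the isomorphism $K[X]^{U}\cong (K[X]\otimes K[G]^{U})^{G}$ and Hilbert's theorem) are correct. One minor point: in deriving that isomorphism you do not need reductivity of $G$ to ``commute invariants with tensor products''; what you use is simply that the $G$- and $U$-actions on $K[X]\otimes K[G]$ commute and that $U$ acts only on the second tensor factor, so one may take $U$-invariants first.
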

Given functions $f_0,\punkte,f_s\in K[X]^{G}_{\chi}$, such that all ratios $\frac{f_i}{f_j}$ are $G$-invariant rational functions, the
map  \begin{align}\pi\colon& ~ X ----> \textbf{P}^s \nonumber\\
&~ x~\mapsto~ (f_0(x):\punkte:f_s(x)) \nonumber\end{align}
 is not defined on the common zeros of $f_1,\punkte,f_s$. If we extend the number of functions $f_i$ it is possible that the set of common zeros is diminished even though they in general do not vanish completely.\\[1ex]
 These thoughts suggests the definition of the so-called unstable locus. Let $\chi\in X(G)$ be a $G$-character, then we define the unstable locus of $\chi$ to be the subset of unstable points $x\in X$, that is,  $f(x)=0$ for every $f\in K[X]^{G, n\chi}$ and for every integer $n>0$.\\[1ex] 
We, furthermore, define the semi-stable locus of $\chi$ to be the set of $\chi$-semi-stable points in $X$, that is, of points $x\in X$ for which a $\chi$-semi-invariant $f\in K[X]^{G, n\chi}$ for an integer $n>0$ exists, such that $f(x)\neq 0$.\\[1ex]
We define the so-called GIT-quotient of $X$ by $G$ in direction $\chi$ to be \[X\quot_{\chi}G:= \Proj(K[X]^{G}_{\chi})\] together with the induced morphism
$\pi\colon X^{\chi-\sst}\rightarrow X\quot_{\chi}G$.\\[1ex]
If the linear algebraic group $G$ is reductive, the ring $K[X]^G_{\chi}$ is finitely generated (see \cite[6.1(b)]{Mu} or \cite{Re} for more information on the subject) and a morphism
\begin{align}
\pi\lvert_{\chi}\colon& ~ X^{\chi-\sst} \rightarrow ~~~~~~~~~X\quot_{\chi} G ~~~~~~~~~~\subseteq \Proj K[x_0,\punkte,x_s]\nonumber\\
&~~~~x~~~~\mapsto~ (f_0(x):\punkte:f_s(x)). \nonumber
\end{align}
is obtained, where $f_0,\ldots,f_s\in K[X]^G_{\chi}$ are generating semi-invariants of degrees $a_0,\ldots, a_s$ and $x_i$ is of weight $a_i$ for all $i\in\{0,\punkte,s\}$. We call $\pi\lvert_{\chi}$ a GIT-quotient map  of $X$ by $G$ in direction $\chi$. 
\subsection{Toric varieties}
Since our considerations will involve the notion of a toric variety, we discuss it briefly. For more information on the subject, the reader is referred to \cite{Fu}.\\[1ex]
A \textit{toric variety} is an irreducible variety $X$ which containes $(K^*)^n$ as an open subset, such that the action of $(K^*)^n$ on itself extends to an action of $(K^*)^n$ on $X$.\\[1ex]
Let $N$ be a \textit{lattice}, that is, a free abelian group $N$ of finite rank. By $M:=\Hom_{\textbf{Z}}(N,\textbf{Z})$ we denote the \textit{dual lattice}, together with the induced dual pairing $\langle\_,\_\rangle$.  Consider the vector space $N_{\textbf{R}}:=N\otimes_{\textbf{Z}}\textbf{R}\cong \textbf{R}^n$.\\[1ex]
A subset $\sigma\subseteq N_{\textbf{R}}$ is called a \textit{strongly convex rational polyhedral cone} if $\sigma\cap (-\sigma) =\{0\}$ and if there is a finite set $S\subseteq N$ that generates $\sigma$, that is,
\[\sigma = \Cone(S):= \left\lbrace \sum\limits_{s\in S} \lambda_s\cdot s \mid \lambda_s\geq 0 \right\rbrace. \]
Given a strongly convex rational polyhedral cone $\sigma$, we define its dual by
\[\sigma^{\vee}:=\{m\in \Hom_{\textbf{R}}(\textbf{R}^n,\textbf{R})\mid \langle m,v\rangle\geq 0~\textrm{for~all}~v\in\sigma\}\]
and its corresponding additive semigroup by $S_{\sigma}:=\sigma^{\vee}\cap M$, which is finitely generated due to Gordon's Lemma (see \cite{Fu}). Note that  if $\sigma$ is a maximal dimensional strongly convex rational polyhedral cone, then $\sigma^{\vee}$ is one as well. We associate to it the semigroup algebra $KS_{\sigma}$ and obtain an affine toric variety $\Spec KS_{\sigma}$. The following lemma can be found in \cite{CLS}.
\begin{lemma}\label{toricco}
 An affine toric variety $X$ is isomorphic to $\Spec KS_{\sigma}$ for some strongly convex rational polyhedral cone $\sigma$ if and only if $X$ is normal.
\end{lemma}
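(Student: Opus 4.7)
The plan is to establish both directions separately, leaning on the algebra--geometry dictionary between affine varieties and their coordinate rings.

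For the ``Spec $KS_\sigma$ is normal'' direction, first I would verify that the semigroup $S_\sigma = \sigma^\vee\cap M$ is \emph{saturated} in $M$: if $km \in S_\sigma$ with $k>0$, then $\langle km,v\rangle = k\langle m,v\rangle \geq 0$ for all $v\in\sigma$ forces $m\in S_\sigma$. The key geometric step is to write $\sigma$ as the intersection of the halfspaces defined by its rays $\rho$, which dualizes into an expression
\[KS_\sigma \;=\; \bigcap_{\rho}KS_{\rho},\]
with $\rho$ ranging over the one-dimensional faces of $\sigma$. Each $KS_\rho$ is isomorphic to a Laurent polynomial ring in $n-1$ variables adjoin one polynomial generator, hence is normal. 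Since a finite intersection of normal subrings of a common fraction field is normal, $\Spec KS_\sigma$ is normal.

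For the converse, let $X$ be a normal affine toric variety with open torus $T=(K^*)^n$. Because the $T$-action on $K[X]\subseteq K[T]=K[M]$ is rational, the coordinate ring decomposes into $T$-weight spaces
\[K[X] \;=\; \bigoplus_{m\in S} K\chi^m\]
for some subsemigroup $S\subseteq M$ containing $0$. Finite type of $X$ implies that $S$ is finitely generated, and density of $T$ in $X$ implies that the group generated by $S$ has full rank in $M$, which one may assume equals $M$. Normality of $X$ translates into saturatedness of $S$: if $km\in S$ with $k>0$, then $\chi^m\in K(M)=\mathrm{Frac}(K[X])$ satisfies the monic relation $x^k - \chi^{km} = 0$ over $K[X]$, whence $\chi^m\in K[X]$ and $m\in S$. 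Setting
\[\sigma \;:=\; \{v\in N_{\textbf{R}} \mid \langle m,v\rangle \geq 0 \text{ for all }m\in S\}\]
yields a rational polyhedral cone (by finite generation of $S$) that is strongly convex (since $S$ spans $M$ as a group), and one checks that $S = \sigma^\vee\cap M = S_\sigma$.

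The main obstacle I anticipate is this final identification $S=\sigma^\vee\cap M$ in the converse: translating the three algebraic conditions on $S$ (finitely generated, saturated, group-generating) cleanly into the geometric equality requires the convex-geometric fact that the saturation of a finitely generated subsemigroup of $M$ coincides with the intersection of $M$ with the rational polyhedral cone it spans inside $M_{\textbf{R}}$. The inclusion $S\subseteq\sigma^\vee\cap M$ is tautological, but the reverse inclusion genuinely uses Gordan's Lemma together with saturatedness, and is the place where I would invoke, rather than reprove, the standard convex-geometric toolkit underlying \cite{Fu,CLS}.
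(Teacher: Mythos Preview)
The paper does not actually prove this lemma: it is quoted as a known fact with a reference to \cite{CLS}, so there is no ``paper's own proof'' to compare against. Your argument is correct and is essentially the standard one found in that reference (and in \cite{Fu}): normality of $\Spec KS_\sigma$ via the ray decomposition $KS_\sigma=\bigcap_\rho KS_\rho$ into localized polynomial rings, and the converse via the weight-space decomposition $K[X]=\bigoplus_{m\in S}K\chi^m$ together with the translation \emph{normal $\Leftrightarrow$ $S$ saturated}. The only point worth tightening is the one you already flag: the equality $S=\sigma^\vee\cap M$ amounts to showing that a finitely generated, saturated, group-generating subsemigroup of $M$ coincides with $\Cone(S)\cap M$, which uses that any lattice point in a rational polyhedral cone is a nonnegative \emph{rational} combination of its lattice generators (so some positive multiple lies in $S$, and saturation finishes). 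This is indeed standard convex geometry, and citing \cite{CLS} for it is exactly what the paper itself does for the whole statement.
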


 \section{Translation to a representation-theoretic setup}\label{transsect}
We fix an upper-block parabolic subgroup $P$ of $\GL_n$ of block sizes $(b_1,\punkte,b_p)$, the standard Borel subgroup $B\subset\GL_n$ and its unipotent subgroup $U\subset B$ and will discuss their actions on the nilpotent cone $\N$ of nilpotent complex matrices.\\[1ex] 
We start by recapitulating basic knowledge about the representation theory of finite-dimensional algebras before translating the above setup into this context.\\[1ex] 
A \textit{finite quiver} $\Q$ is a directed graph $\Q=(\Q_0,\Q_1,s,t)$ with a finite set of \textit{vertices} $\Q_0$ and  a finite set of \textit{arrows} $\Q_1$, whose elements are written as $\alpha\colon s(\alpha)\rightarrow t(\alpha)$.
Its \textit{path algebra} $K\Q$ is defined as the $K$-vector space with a basis consisting of all paths in $\Q$, that is, sequences of arrows $\omega=\alpha_s\punkte\alpha_1$, such that $t(\alpha_{k})=s(\alpha_{k+1})$ for all $k\in\{1,\punkte,s-1\}$; we formally include a path $\varepsilon_i$ of length zero for each $i\in \Q_0$ starting and ending in $i$. The multiplication is defined by
\begin{center}
 $\omega\cdot\omega'=\left\{\begin{array}{ll}\omega\omega',&~\textrm{if}~t(\beta_t)=s(\alpha_1);\\
0,&~\textrm{otherwise.}\end{array}\right.$\end{center}
where $\omega\omega'$ is the  concatenation of paths $\omega$ and $\omega'$.\\[1ex]
We define the \textit{radical} $\rad(K\Q)$ of $K\Q$ to be the (two-sided) ideal generated by all paths of positive length; then an arbitrary ideal $I$ of $K\Q$ is called \textit{admissible} if there exists an integer $s$ with $\rad(K\Q)^s\subset I\subset\rad(K\Q)^2$.\\[1ex]
A finite-dimensional \textit{$K$-representation} of $\Q$ is a tuple \[((M_i)_{i\in \Q_0},(M_\alpha\colon M_i\rightarrow M_j)_{(\alpha\colon i\rightarrow j)\in \Q_1}),\] where the $M_i$ are $K$-vector spaces, and the $M_{\alpha}$ are $K$-linear maps.\\[1ex]
 A \textit{morphism of representations} $M=((M_i)_{i\in \Q_0},(M_\alpha)_{\alpha\in \Q_1})$ and
 \mbox{$M'=((M'_i)_{i\in \Q_0},(M'_\alpha)_{\alpha\in \Q_1})$} consists of a tuple of $K$-linear maps $(f_i\colon M_i\rightarrow M'_i)_{i\in \Q_0}$, such that $f_jM_\alpha=M'_\alpha f_i$ for every arrow $\alpha\colon i\rightarrow j$ in $\Q_1$.\\[1ex]
For a representation $M$ and a path $\omega$ in $\Q$ as above, we denote $M_\omega=M_{\alpha_s}\cdot\punkte\cdot M_{\alpha_1}$. A representation $M$ is called \textit{bound by $I$} if $\sum_\omega\lambda_\omega M_\omega=0$ whenever $\sum_\omega\lambda_\omega\omega\in I$.\\[1ex]
We denote by $\rep_K(\Q)$ the abelian $K$-linear category of all representations of $\Q$ and by   $\rep_K(\Q,I)$ the category of representations of $\Q$ bound by $I$; the latter is equivalent to the category of finite-dimensional $K\Q/I$-representations.\\[1ex]
Given a representation $M$ of $\Q$, its \textit{dimension vector} $\dimv M\in\mathbf{N}\Q_0$ is defined by $(\dimv M)_{i}=\dim_K M_i$ for $i\in \Q_0$. Let us fix a dimension vector $\df\in\mathbf{N}\Q_0$, then we denote by $\rep_K(\Q,I)(\df)$ the full subcategory of $\rep_K(\Q,I)$ which consists of representations of dimension vector $\df$.\\[1ex]
By defining the affine space $R_{\df}(\Q):= \bigoplus_{\alpha\colon i\rightarrow j}\Hom_K(K^{d_i},K^{d_j})$, one realizes that its points $m$ naturally correspond to representations $M\in\rep_K(\Q)(\df)$ with $M_i=K^{d_i}$ for $i\in \Q_0$. 
 Via this correspondence, the set of such representations bound by $I$ corresponds to a closed subvariety $R_{\df}(\Q,I)\subset R_{\df}(\Q)$.\\[1ex]
The algebraic group $\GL_{\df}=\prod_{i\in \Q_0}\GL_{d_i}$ acts on $R_{\df}(\Q)$ and on $R_{\df}(\Q,I)$ via base change, furthermore the $\GL_{\df}$-orbits $\Orb_M$ of this action are in bijection to the isomorphism classes of representations $M$ in $\rep_K(\Q,I)(\df)$.
There is an induced $\GL_{\df}$-action on $K[R_{\df}(\Q)]$ which yields the natural notion of semi-invariants. \\[1ex]
Let us denote by $\add\Q$ the \textit{additive category} of $\Q$ with objects $O(i)$ corresponding to the vertices $i\in\Q_0$ and morphisms induced by the paths in $\Q$. Since every representation $M\in \rep_K(\Q)$ can naturally be seen as  a functor from $\add \Q$ to $\Mod K$, we denote this functor by $M$ as well.\\[1ex]
 Let $\phi\colon\bigoplus_{i=1}^n O(i)^{x_i}\rightarrow \bigoplus_{i=1}^n O(i)^{y_i}$ be an arbitrary morphism in $\add \Q$ and consider $\df\in \mathbf{N}\Q_0$, such that $\sum_{i\in \Q_0}x_i\cdot \df_i=\sum_{i\in \Q_0}y_i\cdot \df_i$. An induced so-called determinantal semi-invariant is given by
\[ f_{\phi}\colon~R_{\df}(\Q)\rightarrow K ;~~ m~~\mapsto \det(M(\phi)), \]
where $m\in R_{\df}(\Q)$ and $M\in  \rep_K(\Q)(\df)$ are related via the above mentioned correspondence.
 The following theorem (see \cite{SvB}) is due to A. Schofield and M. van den Bergh.
\begin{theorem}\label{Schosemi}
 The semi-invariants in $K[R_{\df}(\Q)]^{\GL_{\df}}_*$ are spanned by the determinantal semi-invariants $f_{\phi}$.
\end{theorem}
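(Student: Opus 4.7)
The plan is to leverage reductivity of the acting group $\GL_{\df}=\prod_{i\in\Q_0}\GL_{d_i}$ together with the classical representation theory of products of general linear groups over $\mathbf{C}$. Every character of $\GL_{\df}$ has the form $\chi=\prod_i\det_i^{c_i}$ with $c_i\in\textbf{Z}$, so a $\chi$-semi-invariant is precisely an element of $K[R_{\df}(\Q)]$ on which each factor $\GL_{d_i}$ acts by $\det_i^{c_i}$.

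First I would verify directly that every $f_\phi$ is a semi-invariant. Given $\phi\colon\bigoplus_i O(i)^{x_i}\rightarrow\bigoplus_i O(i)^{y_i}$ in $\add\Q$ with $\sum_i x_id_i=\sum_i y_id_i$, the matrix $M(\phi)$ is square of size $N:=\sum_i x_id_i$, whose block entries are $K$-linear combinations of path maps $M_\omega$. Under base change by $g=(g_i)_i\in\GL_{\df}$, the domain and codomain of $M(\phi)$ transform by block-diagonal matrices with $x_i$, resp. $y_i$, copies of $g_i$, whence
\[ f_\phi(g.m)=\prod_i\det(g_i)^{y_i-x_i}\cdot f_\phi(m), \]
so $f_\phi$ is a semi-invariant of weight $\chi_\phi=\prod_i\det_i^{y_i-x_i}$.

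For the spanning direction, I would decompose $K[R_{\df}(\Q)]$ under $\GL_{\df}$ via Cauchy's formula applied to $R_{\df}(\Q)^*\cong\bigoplus_{\alpha\colon i\rightarrow j}V_i\otimes V_j^*$, where $V_i:=K^{d_i}$. Splitting symmetric powers of a sum and then applying Cauchy arrow by arrow yields
\[ K[R_{\df}(\Q)]\;\cong\;\bigotimes_{\alpha\colon i\rightarrow j}\bigoplus_{\lambda^\alpha}S_{\lambda^\alpha}(V_i)\otimes S_{\lambda^\alpha}(V_j)^*. \]
Regrouping tensor factors by vertex and extracting the $\GL_{d_i}$-isotypic component of weight $\det_i^{c_i}$ forces the collection of Schur shapes contributing at vertex $i$ to combine into a rectangular representation of height $d_i$, i.e.\ into a power of the determinant. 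By the First Fundamental Theorem of invariant theory for $\GL_{d_i}$, the resulting space of $\chi$-semi-invariants is then spanned by determinants of matrices whose entries are $K$-linear combinations of path maps. Such a determinant is exactly $f_\phi$ for the morphism $\phi$ in $\add\Q$ realizing the given block structure, with multiplicities $x_i,y_i$ dictated by the rectangular shapes on either side.

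The main obstacle will lie in the combinatorial translation of the last step: verifying that every $\GL_{d_i}$-invariant contraction of rectangular Schur functors admits a presentation as a determinant of a path-composition matrix, and that the $\chi$-weight subspace is exhausted by such determinants. An alternative approach (in the spirit of Derksen-Weyman) bypasses the Cauchy bookkeeping by using minimal projective presentations in $\rep_K(\Q)$: every semi-invariant of weight $\chi$ arises as $f_\phi$ for $\phi\colon P_1\rightarrow P_0$ a presentation of some representation, with $\chi$ encoded by $\dimv P_0-\dimv P_1$; spanning then follows by induction along short exact sequences together with the multiplicativity of $f_\phi$ under direct sums. I would expect either route to reach the stated result, but the presentation-theoretic version appears cleanest since it keeps track of the weight intrinsically.
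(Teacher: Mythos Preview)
The paper does not supply a proof of this theorem: it is quoted as a result of Schofield and van den Bergh \cite{SvB} and invoked as a black box in the proof of Theorem~\ref{genersemi}. There is therefore no paper-internal argument to compare your proposal against.

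As to the proposal itself: the semi-invariance computation for $f_\phi$ is correct. For the spanning direction, both routes you sketch are legitimate and appear in the literature (the Cauchy/FFT approach is close to Derksen--Weyman, the projective-presentation approach is essentially how Schofield--van den Bergh and Derksen--Weyman organize the weight bookkeeping). However, your sketch stops precisely at the nontrivial step. The First Fundamental Theorem for $\GL_{d_i}$ gives you that the multilinear $\chi$-covariants are spanned by products of path maps composed in all possible ways; passing from ``products of paths with the right transformation law'' to ``determinants of a single square block matrix $M(\phi)$'' is the actual content of the theorem and requires either an explicit skew-symmetrization argument or the spectral-sequence/resolution machinery that the cited papers deploy. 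What you call the ``main obstacle'' is not a bookkeeping issue but the heart of the proof, and your outline does not indicate how you would carry it out. The alternative via projective presentations is cleaner, but the inductive step (stability under short exact sequences) again needs a concrete argument, typically a cofactor/Laplace expansion identity relating $f_{\phi\oplus\phi'}$ to $f_\phi\cdot f_{\phi'}$ plus a filtration argument, which you have not supplied.
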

We will make use of the following fact on associated fibre bundles to translate the above described algebraic group action into another algebraic group action in the context of representation theory (see, for example, \cite{Se} or \cite{Bo1}).
\begin{theorem}\label{assocfibrebundles}
Let $G$ be a linear algebraic group, let $X$ and $Y$ be $G-$varieties, and let $\pi\colon X \rightarrow Y$ be a $G$-equivariant morphism. Assume that $Y$ is a single $G$-orbit, $Y = G.y_0$. Let $H$ be the stabilizer of $y_0$ and set $F\coloneqq \pi^{-1} (y_0)$. Then $X$ is isomorphic to the associated fibre bundle $G\times^HF$, and the embedding $\phi\colon F \hookrightarrow X$ induces a bijection $\Phi$ between the $H$-orbits in $F$ and the $G$-orbits in $X$ preserving orbit closures and types of singularities.
\end{theorem}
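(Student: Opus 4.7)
The plan is to construct an explicit isomorphism $X \cong G\times^H F$ and then read off the orbit correspondence from the fibre bundle structure. Recall that $G\times^H F$ is defined as the geometric quotient of $G\times F$ by the free $H$-action $h.(g,f) = (gh^{-1}, h.f)$, whose existence (and the fact that it fibres over $G/H \cong Y$ with fibre $F$) is a standard construction once one knows $G\to G/H$ is a principal $H$-bundle.

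The first step is to define $\Psi\colon G\times F \to X$ by $\Psi(g,f) = g.f$. This is a morphism because the $G$-action on $X$ is. It is invariant under the $H$-action above: $\Psi(gh^{-1}, h.f) = gh^{-1}.(h.f) = g.f$. Hence $\Psi$ descends to a morphism $\bar\Psi\colon G\times^H F \to X$. For surjectivity, given $x\in X$, apply $\pi$: since $\pi$ is $G$-equivariant and $Y = G.y_0$, we have $\pi(x) = g.y_0$ for some $g\in G$, so $g^{-1}.x \in \pi^{-1}(y_0) = F$, showing $x = \Psi(g, g^{-1}.x)$. For injectivity, if $g_1.f_1 = g_2.f_2$ then applying $\pi$ gives $g_1.y_0 = g_2.y_0$, so $h\coloneqq g_2^{-1}g_1 \in H$; then $f_2 = h.f_1$ and $(g_1,f_1) = (g_2 h, h^{-1}.f_2)$ lies in the same $H$-orbit as $(g_2,f_2)$. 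That $\bar\Psi$ is an isomorphism of varieties (not merely a bijection) follows from the local triviality of $G\to G/H$: over an open $U\subseteq Y$ trivialising the principal bundle, both sides restrict to $U\times F$, and $\bar\Psi$ restricts to a morphism with an obvious inverse.

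Next, the orbit correspondence. The $G$-action on $G\times^H F$ is by left multiplication on the first factor, so two classes $[g_1,f_1]$ and $[g_2,f_2]$ are in the same $G$-orbit iff $f_1$ and $f_2$ lie in the same $H$-orbit of $F$. This gives a bijection $\Phi$ between $H$-orbits in $F$ and $G$-orbits in $X$, with $\Phi(H.f) = G.f$ (identifying $F\hookrightarrow X$ via $f\mapsto [1,f]$). Preservation of orbit closures is because $\bar\Psi$ is an open/closed map along each fibre: the closure of $G.f$ in $X$, pulled back to the trivialising chart $U\times F$, is $U\times \overline{H.f}$, so $\overline{G.f} \cap F = \overline{H.f}$.

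The subtlest point is the statement about types of singularities. This should follow from the fact that $X\to Y$ is étale locally (in fact Zariski locally, as $G\to G/H$ is Zariski locally trivial for $G = \GL_n$ and its parabolic subgroups by a Bruhat-type argument, but in general étale local triviality suffices) a product $U\times F$, so for any $x = [g,f]\in X$, the local ring $\mathcal{O}_{X,x}$ is obtained from $\mathcal{O}_{F,f}$ by a smooth base change. Since a smooth morphism preserves the analytic local isomorphism type up to a polynomial disc factor, the type of singularity at $x\in \overline{G.f}$ agrees with that at $f\in \overline{H.f}$. I expect this last step to be the main obstacle to write out carefully; the embedding-and-bijection part is routine, but transferring singularity types rigorously requires invoking the smooth-local-triviality of associated fibre bundles, for which one appeals to the references \cite{Se, Bo1}.
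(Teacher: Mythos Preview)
Your argument is the standard one and is essentially correct; the construction of $\bar\Psi$, the bijectivity check, and the orbit correspondence are all fine, and you correctly flag that the singularity statement rests on smooth local triviality of the fibre bundle.

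However, there is nothing to compare against: the paper does not prove this theorem. It is stated as a known fact with the parenthetical ``(see, for example, \cite{Se} or \cite{Bo1})'' and is used as a black box to derive Lemma~\ref{bijection}. So your proposal is not an alternative to the paper's proof but rather a proof where the paper offers none. If anything, your write-up is more than the paper provides, and your closing remark that one ``appeals to the references \cite{Se, Bo1}'' for the local-triviality and singularity-type claims is exactly what the paper itself does for the entire statement.
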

Let us define  $\Q_p$ to be the quiver
\begin{center}\begin{tikzpicture}
\matrix (m) [matrix of math nodes, row sep=0.01em,
column sep=1.5em, text height=0.5ex, text depth=0.1ex]
{\Q_p\colon & \bullet & \bullet &  \bullet & \cdots  & \bullet & \bullet  & \bullet \\ & \mathrm{1} & \mathrm{2} &  \mathrm{3} & &   \mathrm{p-2} &  \mathrm{p-1}  & \mathrm{p} \\ };
\path[->]
(m-1-2) edge node[above=0.05cm] {$\alpha_1$} (m-1-3)
(m-1-3) edge  node[above=0.05cm] {$\alpha_2$}(m-1-4)
(m-1-6) edge  node[above=0.05cm] {$\alpha_{p-2}$}(m-1-7)
(m-1-7) edge node[above=0.05cm] {$\alpha_{p-1}$} (m-1-8)
(m-1-8) edge [loop right] node{$\alpha$} (m-1-8);\end{tikzpicture}\end{center} 
and consider the finite-dimensional algebra $ K \Q_p/I$, where $I\coloneqq (\alpha^n)$ is an admissible ideal. Let us fix the dimension vector 
\[\dfp\coloneqq(d_1,\punkte,d_p)\coloneqq(b_1,b_1+b_2, \punkte, b_1+...+b_p)\]
 and formally set $b_0=0$. The algebraic group $\GL_{\dfp}$ acts on $R_{\dfp}(\Q_p,I)$; the orbits of this action are in bijection with the isomorphism classes of representations in $\rep_{K}(\Q_p,I)(\dfp)$.\\[1ex]
Let us define $\rep_{K}^{\inj}(\Q_p,I)(\dfp)$ to be the full subcategory of $\rep_{K}(\Q_p,I)(\dfp)$ consisting of representations $((M_i)_{1\leq i\leq p},(M_{\rho})_{\rho\in \Q_1})$, such that $M_{\rho}$ is injective if $\rho=\alpha_i$ for every $i\in\{1,\punkte, p-1\}$. Corresponding to this subcategory, there is an open subset $$R_{\dfp}^{\inj}(\Q_p,I)\subset R_{\dfp}(\Q_p,I),$$ which is stable under the $\GL_{\dfp}$-action.
We denote $\Orb_M:=\GL_{\dfp}.m$ if $m\in R_{\dfp}^{\inj}(\Q_p,I)$ corresponds to the representation $M\in\rep^{\inj}(\Q_p,I)(\dfp)$.\\[1ex]
The following lemma is a slightly different version of \cite[Lemma 3.2]{BoRe}; it can be proved analogously. 
\begin{lemma} \label{bijection}
There is an isomorphism $R_{\dfp}^{\inj}(\Q_p,I)\cong \GL_{\dfp}\times^{P}\N$. Thus, there exists a bijection $\Phi$ between the set of $P$-orbits in $\N$ and the set of $\GL_{\dfp}$-orbits in $R_{\dfp}^{\inj}(\Q_p,I)$, which sends an orbit $P.N\subseteq \N$ to the isomorphism class of the representation
\begin{center}\begin{tikzpicture}
\matrix (m) [matrix of math nodes, row sep=0.05em,
column sep=2em, text height=1.5ex, text depth=0.2ex]
{ K^{d_1} & K^{d_2} & K^{d_3} & \cdots  & K^{d_{p-2}} & K^{d_{p-1}}  & K^{n}\\ };
\path[->]
(m-1-1) edge node[above=0.05cm] {$\epsilon_1$} (m-1-2)
(m-1-2) edge  node[above=0.05cm] {$\epsilon_2$}(m-1-3)
(m-1-3) edge  (m-1-4)
(m-1-4) edge  (m-1-5)
(m-1-5) edge  node[above=0.05cm] {$\epsilon_{p-2}$}(m-1-6)
(m-1-6) edge node[above=0.05cm] {$\epsilon_{p-1}$} (m-1-7)
(m-1-7) edge [loop right] node{$N$} (m-1-7);\end{tikzpicture}\end{center}
 (denoted $M^N$) with natural embeddings $\epsilon_i\colon K^{d_i}\hookrightarrow K^{d_{i+1}}$. This bijection preserves codimensions.
\end{lemma}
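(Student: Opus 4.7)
The plan is to apply the associated fibre bundle theorem (Theorem \ref{assocfibrebundles}) with $G = \GL_{\dfp}$ and $X = R_{\dfp}^{\inj}(\Q_p, I)$. For the target $Y$, I will take the open subvariety
\[ Y \coloneqq \bigl\{(f_1, \ldots, f_{p-1}) : f_i \in \Hom_K(K^{d_i}, K^{d_{i+1}}) \text{ injective}\bigr\} \subseteq \bigoplus_{i=1}^{p-1} \Hom_K(K^{d_i}, K^{d_{i+1}}), \]
equipped with the $\GL_{\dfp}$-action $(g_1, \ldots, g_p).(f_i) = (g_{i+1} f_i g_i^{-1})$. The obvious forgetful morphism $\pi\colon R_{\dfp}^{\inj}(\Q_p, I) \to Y$ sending a representation $((M_i), (M_\rho))$ to the tuple $(M_{\alpha_1}, \ldots, M_{\alpha_{p-1}})$ is plainly $\GL_{\dfp}$-equivariant and morphic.

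Next I would verify the three hypotheses of Theorem \ref{assocfibrebundles}. First, $Y$ is a single $\GL_{\dfp}$-orbit: given any injective tuple $(f_i)$, one can inductively extend the images to bases of the $K^{d_{i+1}}$ and produce $(g_i)$ transporting $(f_i)$ to the standard tuple $y_0 \coloneqq (\epsilon_1, \ldots, \epsilon_{p-1})$ of block-inclusion matrices. Second, the stabilizer of $y_0$ is computed from the equations $g_{i+1}\epsilon_i = \epsilon_i g_i$; these force $g_p$ to preserve the standard flag of subspaces $\epsilon_{p-1}\cdots\epsilon_1(K^{d_1}) \subset \cdots \subset \epsilon_{p-1}(K^{d_{p-1}}) \subset K^n$ of type $(b_1, \ldots, b_p)$, while $g_1, \ldots, g_{p-1}$ are uniquely determined by $g_p$ via restriction. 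Projecting to the last factor therefore identifies the stabilizer with the parabolic $P$. Third, the fibre $\pi^{-1}(y_0)$ consists of tuples whose arrow data at $\alpha_1, \ldots, \alpha_{p-1}$ are the fixed inclusions $\epsilon_i$, so the remaining datum is a nilpotent endomorphism $N = M_\alpha$ of $K^n$ (nilpotent because of the relation $\alpha^n \in I$); this identifies $\pi^{-1}(y_0) \cong \N$ as $P$-varieties, where $P$ acts by conjugation.

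Combining these, Theorem \ref{assocfibrebundles} delivers the isomorphism $R_{\dfp}^{\inj}(\Q_p, I) \cong \GL_{\dfp} \times^P \N$ together with the orbit bijection $\Phi$ preserving orbit closures; tracing back through the construction, the $P$-orbit $P.N$ corresponds to the $\GL_{\dfp}$-orbit of the representation $M^N$ described in the statement. Preservation of codimension is automatic from the fibre-bundle structure, since for any $P$-stable subvariety $Z \subseteq \N$ one has $\dim(\GL_{\dfp} \times^P Z) = \dim \GL_{\dfp} - \dim P + \dim Z$, so the codimension inside $\GL_{\dfp} \times^P \N$ equals the codimension of $Z$ in $\N$. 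The only step requiring genuine care is verifying that $Y$ really is a single orbit (and not just set-theoretically transitive) and that the stabilizer computation yields $P$ rather than some larger or smaller group; both are straightforward linear algebra but must be done block by block to match the chosen dimension vector $\dfp = (b_1, b_1+b_2, \ldots, b_1+\cdots+b_p)$.
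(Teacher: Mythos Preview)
Your proposal is correct and is precisely the intended argument: the paper does not spell out a proof but refers to \cite[Lemma~3.2]{BoRe}, where exactly this application of Theorem~\ref{assocfibrebundles} (forgetting the loop datum, identifying the base as a single $\GL_{\dfp}$-orbit with stabilizer $P$, and recognizing the fibre over the standard inclusions as $\N$ with the conjugation action) is carried out in the Borel case. Your block-by-block verification of the stabilizer and the codimension remark are the right points to check.
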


\section{Generic normal forms in the nilpotent cone}\label{gnfsect}
We discuss the $P$-action  on the nilpotent cone $\N$ now and introduce a generic normal form. We, thereby, generalize a generic normal form for the orbits of the Borel-action which is introduced in \cite{BoRe,Hal}. 
\begin{definition}
 Let $G$ be an algebraic group acting on an affine Variety $X$. A subset $X_0\subseteq X$ is called a generic normal form, if 
\begin{enumerate}
 \item $G.X_0\subseteq X$ is open and
\item $G.x \neq G.x'$ for all $x,x'\in X_0$, where $x\neq x'$.
\end{enumerate}

\end{definition}

 Let $V$ be an $n$-dimensional $K$-vector space and denote the space of partial $p$-step flags of dimensions $\dfp$ by $\Fa_{\dfp}(V)$, that is, $\Fa_{\dfp}(V)$ contains flags
\[(0=F_0\subset F_1\subset \punkte \subset F_{p-2} \subset F_{p-1} \subset F_{p}=V),\]
such that $\dim_K F_i=d_i$. Let $\varphi$ be a nilpotent endomorphism of $V$ and consider pairs of a nilpotent endomorphism and a $p$-step flag up to base change in $V$, that is, up to the $\GL(V)$-action via $g.(F_*,\varphi) = (gF_*, g\varphi g^{-1})$.\\[1ex]
Let us fix a partial flag $F_*\in \Fa_{\dfp}(V)$ and a nilpotent endomorphism $\varphi$ of $V$.
\begin{lemma}\label{pargenlem}
 The following properties of the pair $(F_*, \varphi)$ are equivalent:
\begin{enumerate}
 \item $\dim_K \varphi^{n-d_k}(F_k)=d_k$ for every $k\in\{0,\punkte, p\}$,
\item there exists a basis $\{w_1,\punkte, w_n\}$ of $V$, such that  for  all    $ k\in\{1,\punkte, p\}$:
\begin{enumerate} 
 \item[($ \textrm{a}_k$)]  $F_k=\left\langle w_1,\punkte,w_{d_k}\right\rangle$
\end{enumerate}
 and for every  $ k\in\{2,\punkte, p\}$:
\begin{enumerate} 
\item[($ \textrm{b}_k$)]  $\varphi(w_x)=\left\lbrace \begin{array}{ll} 

w_{x+1}\mod\left\langle w_{d_1+2},\punkte,w_n\right\rangle, &  \textrm{if}~x<d_1;\\
w_{x+1} \mod \left\langle w_{d_k+1},\punkte, w_n\right\rangle, &  \textrm{if}~d_{k-1}\leq x< d_k;\\
0, & \textrm{if}~ x=n.
                           \end{array}\right. $
\end{enumerate}
\end{enumerate}
\end{lemma}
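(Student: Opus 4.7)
This follows by induction on $j \geq 0$, establishing the congruence
\[ \varphi^j(w_x) \equiv w_{x+j} \pmod{\langle w_{x+j+1}, \ldots, w_n\rangle} \]
for every $x \in \{1, \ldots, n\}$ (setting $w_m := 0$ for $m > n$). The base case $j = 0$ is immediate. For the inductive step one applies $\varphi$ to $w_{x+j}$ using $(b_k)$ via a short case analysis on which block $x+j$ lies in; the point is that both correction spaces $\langle w_{d_1+2}, \ldots, w_n\rangle$ (for $x+j < d_1$) and $\langle w_{d_k+1}, \ldots, w_n\rangle$ (for $d_{k-1} \leq x+j < d_k$) are contained in $\langle w_{x+j+2}, \ldots, w_n\rangle$ in the relevant ranges, together with the filtration-preservation $\varphi(\langle w_y, \ldots, w_n\rangle) \subseteq \langle w_{y+1}, \ldots, w_n\rangle$, which itself drops out of the same case analysis. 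Specialising to $j = n - d_k$, the iterates $\varphi^{n-d_k}(w_1), \ldots, \varphi^{n-d_k}(w_{d_k})$ have pairwise distinct leading basis vectors $w_{n-d_k+1}, \ldots, w_n$ and are therefore linearly independent; combined with $(a_k)$ this yields $\dim \varphi^{n-d_k}(F_k) = d_k$.

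\textbf{Direction $(1) \Rightarrow (2)$.} The first step is to observe that condition (1) forces $\varphi$ to be a regular nilpotent (Jordan type $(n)$): since $\rank \varphi^{n-d_k} \geq \dim \varphi^{n-d_k}(F_k) = d_k$ for all $k$, the case $k = 1$ together with $\varphi^n = 0$ leaves only a single Jordan block of size $n$. Under this, the conditions in (1) become the transversalities $F_k \oplus \ker \varphi^{n-d_k} = V$ for all $k$, i.e.\ the flag $F_*$ is in general position with respect to the kernel filtration of $\varphi$.

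The basis is then built iteratively. I pick $w_1 \in F_1$ in an appropriate Zariski-open subset and recursively set $w_{x+1} := \varphi(w_x) + c_{x+1}$, where the correction $c_{x+1}$ lies in the prescribed modulus subspace and is chosen to enforce $w_{x+1} \in F_k$ whenever $x + 1$ sits at a block boundary $d_{k-1} + 1$. At each interior step, existence and uniqueness of $c_{x+1}$ come from the transversality above, since the complement of $F_{k-1}$ inside $F_k$ is identified with a specific slice of $V / \ker \varphi^{n-d_{k-1}}$. The main obstacle is the tighter first-block modulus $\langle w_{d_1+2}, \ldots, w_n\rangle$ for $x < d_1$, which forbids a $w_{d_1+1}$-component in $\varphi(w_x)$: this forces $w_{d_1+1}$ to be chosen \emph{simultaneously} with $w_1, \ldots, w_{d_1}$ so as to absorb the $w_{d_1+1}$-components of the iterates, rather than being picked independently afterwards. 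The solvability of this coupled choice is precisely the non-degeneracy content of condition (1) at $k = 1$, and verifying it is the technical heart of the argument.
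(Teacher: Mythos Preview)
Your direction $(2)\Rightarrow(1)$ matches the paper's argument exactly: both establish the congruence $\varphi^j(w_x)\equiv w_{x+j}$ modulo the span of higher basis vectors by a short induction and then read off the dimension count.

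For $(1)\Rightarrow(2)$ you take a genuinely different route from the paper. The paper does \emph{not} build the basis from scratch: it invokes the Borel result \cite[Theorem~5.1]{BoRe} to obtain a basis $\{u_i\}$ adapted to $F_*$ with $\varphi(u_x)\equiv u_{x+1}\bmod\langle u_{x+2},\dots,u_n\rangle$, and then performs two explicit upper-triangular base changes (first within each block $k\geq 2$ to kill the intra-block error terms of $\varphi(u_{d_{k-1}})$, then within $F_1$ to remove the $w_{d_1+1}$-components of the $\varphi(w_x)$ for $x<d_1$), each written out with concrete coefficients and checked directly. Your approach---deduce regular nilpotency from $k=1$, rephrase (1) as transversalities $F_k\oplus\ker\varphi^{n-d_k}=V$, and build $w_{x+1}$ inductively as $\varphi(w_x)$ plus a correction---is more self-contained and conceptually clean, and your observation that the $k=1$ condition alone forces a single Jordan block is correct and worth making.

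That said, your proposal has a genuine gap at exactly the point you flag. You state that the solvability of the coupled choice of $w_1,\dots,w_{d_1},w_{d_1+1}$ ``is precisely the non-degeneracy content of condition (1) at $k=1$'' and that ``verifying it is the technical heart of the argument''---but you do not verify it. Moreover, the inductive scheme as you describe it is circular: the correction $c_{x+1}$ is supposed to lie in $\langle w_{d_k+1},\dots,w_n\rangle$, a subspace built from basis vectors you have not yet constructed; and the claimed ``existence and uniqueness of $c_{x+1}$ from transversality'' does not follow directly, because the transversality is of $F_k$ against $\ker\varphi^{n-d_k}$, whereas the modulus you need is the eventual span $\langle w_{d_k+1},\dots,w_n\rangle$, which is not a priori the same subspace. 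The paper sidesteps this circularity entirely by starting from the Borel normal form, so that all the $u_i$ exist from the outset and the subsequent modifications are visibly upper-triangular. If you want your direct approach to go through, you must either construct the basis top-down (fixing the tail $\langle w_{d_k+1},\dots,w_n\rangle$ first), or identify the moduli with intrinsic subspaces such as $\ker\varphi^{n-d_k}$ and then argue separately that the resulting $w_i$ form a basis adapted to $F_*$; either way, the missing verification is substantial and cannot be waved through.
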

\begin{proof}
\textit{If 2. holds true, then 1. follows:}\\[1ex]
Let $\{w_1,\punkte, w_n\}$ be a basis of $V$ that fulfills ($ \textrm{a}_k$) and ($ \textrm{b}_k$).\\
 An easy induction shows \[\varphi^i(w_x) = \left\lbrace
\begin{array}{ll} 
 w_{x+i} \mod \left\langle  w_j\mid j>x+i\, \right\rangle , &  \textrm{if}~x+i\leq n; \\ 
0, &  \textrm{if}~x+i> n.
                                                        \end{array}\right.\]
 Thus, 
\[\varphi^{n-d_k}(F_k)= \left\langle  \varphi^{n-d_k}(w_1),\punkte, \varphi^{n-d_k}(w_{d_k}) \right\rangle  = \left\langle  w_{n-d_k+1},\punkte, w_n \right\rangle  \] 
and $\dim_K\varphi^{n-d_k}(F_k) = d_k$ for all $k\in\{0,\punkte, p\}$.\\[2ex]
\textit{If 1. holds true, then 2. follows:}\\[1ex]
 By \cite[Theorem 5.1]{BoRe}, we find a basis $\{u_1,\punkte, u_n\}$ of $V$ that is adapted to $F_*$ and which fulfills
\[\varphi\left(u_x\right)=u_{x+1} \mod\left\langle u_{x+2},\punkte,u_n\right\rangle.\]
It is clear by the theorem of the Jordan normal form that we can modify this basis, such that
\[\varphi(u_{x}) =\left\lbrace 
\begin{array}{ll} 
u_{x+1} \mod \left\langle u_{d_k+1},\punkte, u_n\right\rangle, &  \textrm{if}~ d_{k-1}<x< d_k; \\ 
u_{d_k+1} \mod \left\langle u_{d_k+2},\punkte, u_n\right\rangle, &  \textrm{if}~x=d_k.
                                \end{array}\right.\]

Let $k\in\{2,\punkte, p\}$. 
Then there are elements $\eta_{i}\in K$, such that \[\varphi\left(u_{d_{k-1}}\right)=u_{d_{k-1}+1}+\sum\limits_{i=d_{k-1}+2}^{d_{k}}\eta_{i}\cdot u_i \mod \left\langle u_{d_{k}+1},\punkte,u_n\right\rangle\]
We define \[v'_x\coloneqq \left\lbrace
\begin{array}{ll}
u_x+ \sum\limits_{i=x+1}^{d_k}\eta_{d_{k-1}-x+1+i}\cdot v_i, &  \textrm{if}~d_{k-1}<x< d_k;\\
u_x, &  \textrm{otherwise}.\\
                           \end{array}\right.\]
Then clearly $\{v'_1,\punkte, v'_n\}$ build a basis of $V$ that is adapted to $F_*$ and
 \[\varphi\left(v'_x\right)=\left\lbrace \begin{array}{ll} 
v'_{x+1} \mod \left\langle v'_{d_k+1},\punkte, v'_n\right\rangle, &  \textrm{if}~d_{k-1}\leq x< d_k;\\
0, & \textrm{if}~ x=n. 
                           \end{array}\right.\]

We fix elements $\lambda_x\in K$, such that for $1\leq x<d_1$: 
\[\varphi\left(v'_x\right)=v'_{x+1}+\lambda_{x}\cdot v'_{d_1+1}\mod\left\langle v'_{d_1+2},\punkte,v'_n\right\rangle.\]
Let us define
 \[\mu_{x-1}\coloneqq\left\lbrace \begin{array}{ll}
1, &  \textrm{if}~x=1;\\
-\lambda_{d_1-1}, &  \textrm{if}~x=2;\\
-\sum\limits_{i=0}^{x-1}\mu_i\cdot\lambda_{d_1-x+i}, &  \textrm{if}~2<x< d_1, 
                           \end{array}\right. \]
and set
\[w_x\coloneqq\left\lbrace \begin{array}{ll}
\sum\limits_{i=0}^{d_1-x}\mu_i\cdot v'_{x+i}, &  \textrm{if}~x<d_1;\\
v'_x, &  \textrm{if}~x\geq d_1.  
                           \end{array}\right.\]
Then $\{w_1,\punkte, w_n\}$ is a basis of $V$ that is obviously adapted to $F_*$ since $v'_1,\punkte,v'_n$ is adapted to $F_*$. The claim follows.
\end{proof}

We make use of Lemma \ref{pargenlem} in order to find a generic normal form in $\N$. Therefore, given $a,b\in\{0,\punkte,n\}$ and a matrix $N\in \N$, we define $N_{(a,b)}$ to be the submatrix formed by the last $a$ rows and the first $b$ columns of $N$. 
\begin{corollary}\label{hnf} The following conditions on a matrix $N\in\N$ are equivalent:
\begin{enumerate}
\item The first $d_k$ columns of $N^{n-d_k}$ are linearly independent for $k\in\{1,\punkte,p-1\}$ ,
\item the minor $\det ((N^{n-d_k})_{(d_k,d_k)})$ is non-zero for each $k\in\{1,\punkte,p-1\}$ ,
\item $N$ is $P$-conjugate to a unique matrix $H$, such that for all $k\in\{1,\punkte,p\}$:
\[H_{i,j}=\left\lbrace \begin{array}{ll}
0, & \textrm{if}~ i\leq j; \\ 
0, &  \textrm{if}~i=d_1+1~ \textrm{and}~ j<d_1;\\
0, &  \textrm{if}~ d_{k-1}+3\leq i\leq d_k ~  \textrm{and}~ d_{k-1}+1\leq j\leq d_k-2,~ \textrm{such~that}~ i>j+1;\\
0, &  \textrm{if}~ d_{k-1}+2\leq i\leq d_{k}~ \textrm{and}~j=d_{k-1}; \\
1, &  \textrm{if}~i=j+1. \\  
                      \end{array}\right.\]
\end{enumerate}
\end{corollary}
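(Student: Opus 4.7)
The strategy is to use Lemma \ref{pargenlem} as the bridge between (1) and (3), with (2) then following from a direct computation on the normal form.

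I would first observe that (1) is precisely condition 1 of Lemma \ref{pargenlem} applied to the pair $(F_*^{\mathrm{std}}, N)$, where $F_k^{\mathrm{std}}=\langle e_1,\ldots,e_{d_k}\rangle$. By that lemma, (1) is equivalent to the existence of a basis $\{w_1,\ldots,w_n\}$ adapted to $F_*^{\mathrm{std}}$ satisfying the recursive formulas for $Nw_x$. The change-of-basis matrix $g\colon e_i\mapsto w_i$ preserves the standard flag and hence lies in $P$; the formulas for $Nw_x$ modulo the stated subspaces translate column by column into exactly the zero pattern and subdiagonal $1$'s of (3), so $g^{-1}Ng=H$ is in the prescribed form. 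This yields $(1)\Rightarrow(3)$. The converse is obtained by computing directly on $H$: an induction using $H_{j+1,j}=1$ together with the strictly lower-triangular shape shows that $H^{n-d_k}e_j=e_{j+n-d_k}+v_j$, where $v_j$ has support in rows strictly greater than $j+n-d_k$. Hence the first $d_k$ columns of $H^{n-d_k}$ are linearly independent, and since (1) is $P$-invariant it holds for $N$ as well.

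The implication $(2)\Rightarrow(1)$ is immediate, as non-vanishing of a $d_k\times d_k$ minor of $N^{n-d_k}$ forces the corresponding columns to be linearly independent. For the converse, the same inductive formula shows that the bottom $d_k\times d_k$ submatrix of $H^{n-d_k}$ is lower triangular with $1$'s on the diagonal, so the minor equals $1$ on $H$. I would then transport the conclusion to $N$ by an explicit change-of-basis computation, writing out $pH^{n-d_k}p^{-1}$ for $p\in P$ and using the block-triangular structure of $p$ and $p^{-1}$, together with the fact that $p^{-1}$ sends the first $d_k$ standard basis vectors into $F_k^{\mathrm{std}}$, to relate the relevant minor of $N^{n-d_k}$ to that of $H^{n-d_k}$.

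The main obstacle I expect is the uniqueness of $H$ in (3). I would establish it by analyzing the equation $pHp^{-1}=H'$ with $H,H'$ both in the prescribed form and $p\in P$: working column by column for $j=1,2,\ldots,n$, the block-triangular structure of $p$ together with the imposed zeros successively constrains the diagonal blocks of $p$ and ultimately forces the free entries of $H$ and $H'$ to coincide. Equivalently, one can refine the basis construction in the proof of Lemma \ref{pargenlem} and track the residual freedom in $\{w_1,\ldots,w_n\}$ to conclude that the normal form is canonical within each $P$-orbit.
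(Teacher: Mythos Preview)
Your bridge $(1)\Leftrightarrow(3)$ via Lemma~\ref{pargenlem} is exactly what the paper intends (the corollary is stated without proof, as a direct consequence of that lemma), and your translation between condition~2 of the lemma and the explicit zero pattern in $(3)$ is correct. You are also right that the uniqueness of $H$ needs a separate argument; neither the lemma nor the paper supplies one, and your column-by-column plan is the natural way to do it.

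The genuine gap is the transport step for $(1)\Rightarrow(2)$. You propose to verify the minor on $H$ and then carry nonvanishing to $N=pHp^{-1}$ using the block-triangular shape of $p\in P$. This cannot be made to work: the function $N\mapsto\det\bigl((N^{n-d_k})_{(d_k,d_k)}\bigr)$ is a $B$-semi-invariant (Proposition~\ref{semiprop}) but in general \emph{not} a $P$-semi-invariant, because its weight $\sum_{i>n-d_k}\omega_i-\sum_{i\le d_k}\omega_i$ is not constant on the Levi blocks of $P$. Concretely, take $n=5$ with block sizes $(2,3)$, let $H$ be the lower shift matrix (the normal form with all free entries zero), and let $p\in P$ be the permutation matrix of the $3$-cycle $(3\;4\;5)$. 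Then $N=pHp^{-1}$ satisfies $N^{3}e_1=e_5$ and $N^{3}e_2=e_3$, so the first two columns of $N^{3}$ are independent and $(1)$ holds; yet $(N^{3})_{(2,2)}=\left(\begin{smallmatrix}0&0\\1&0\end{smallmatrix}\right)$ has determinant zero, so $(2)$ fails while $(1)$ and $(3)$ hold. Hence the equivalence with $(2)$ is actually false for general $P$, and no transport argument can repair it. It \emph{is} valid for $P=B$, where the minor is a genuine $B$-semi-invariant and nonvanishing is preserved under $B$-conjugation; this Borel case is the only one the paper uses afterwards.
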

As a direct consequence of Lemma \ref{pargenlem}, two corollaries follow.
\begin{corollary}
 The affine space
\[\Ha_B:=\{ H\in \N\mid  H_{i,j}=0~{\rm for}~i\leq j;~ H_{i+1,i}=1~{\rm for~all~}i\}\]
is a generic normal form for the $B$-action on $\N$. We denote $\N_B\coloneqq B.\Ha_B\subseteq \N$, which is an open subset.
\end{corollary}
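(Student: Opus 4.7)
The plan is to deduce the statement by specialising the preceding Corollary~\ref{hnf} to $P=B$, i.e.\ to the block sizes $b_1=\cdots=b_n=1$; then $p=n$ and $d_k=k$ for every $k$. The first step is purely combinatorial: substituting $d_k=k$ into the case description of the normal form $H$ in Corollary~\ref{hnf}, the ranges $d_{k-1}+3\le i\le d_k$ and $d_{k-1}+2\le i\le d_k$ are empty for every $k$ (consecutive $d_k$ differ by exactly $1$), so the entire description of $H$ collapses to $H_{i,j}=0$ for $i\le j$ together with $H_{i+1,i}=1$ for all $i$. These are precisely the defining conditions of $\Ha_B$.

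With this identification both requirements in the definition of a generic normal form are immediate consequences of Corollary~\ref{hnf}. By the equivalence (2)$\Leftrightarrow$(3) there, $B.\Ha_B$ coincides with the locus $\{N\in\N\mid\det((N^{n-k})_{(k,k)})\neq 0\text{ for all }k\in\{1,\ldots,n-1\}\}$, which is Zariski open in $\N$ as the non-vanishing set of finitely many polynomial minors. It is non-empty because the element of $\Ha_B$ with all free entries zero (the principal nilpotent Jordan block) lies in $\Ha_B\subseteq B.\Ha_B$ and a direct computation shows its powers satisfy every minor condition trivially. The uniqueness clause in (3) of Corollary~\ref{hnf} asserts exactly that two distinct elements of $\Ha_B$ cannot be $B$-conjugate.

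Thus no argument specific to the Borel case beyond this reduction of case ranges is required; the main obstacle is inherited from Corollary~\ref{hnf} itself, in particular from its uniqueness assertion. For the Borel specialisation, a consistency check of that uniqueness can be carried out directly by writing $gH=H'g$ with $g=(g_{i,j})\in B$ and $H,H'\in\Ha_B$ and comparing entries from the top-left downward: the strict lower-triangular zero pattern of $H,H'$ combined with the upper-triangularity of $g$ forces $g_{i,j}=0$ for $i<j$, after which the equations $H_{i+1,i}=H'_{i+1,i}=1$ force all diagonal entries of $g$ to coincide, so $g$ is scalar and $H=H'$.
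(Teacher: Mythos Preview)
Your proposal is correct and follows the same approach as the paper, which simply records the corollary as a direct consequence of Lemma~\ref{pargenlem} (equivalently, of Corollary~\ref{hnf}) without spelling out the specialisation. Your explicit verification that the case ranges in Corollary~\ref{hnf} become vacuous for $d_k=k$, together with the identification of $B.\Ha_B$ with the non-vanishing locus of the minors $\det((N^{n-k})_{(k,k)})$, is exactly the intended argument; the additional direct check of uniqueness via $gH=H'g$ is not in the paper but is a welcome self-contained confirmation.
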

\begin{corollary}
 The space
\[\Ha_U:=\{ H\in \N\mid  H_{i,j}=0~{\rm for}~i\leq j;~ H_{i+1,i}\neq 0~{\rm for~all~}i\}\]
is a generic normal form for the $U$-action on $\N$. We denote  $\N_U\coloneqq U.\Ha_U\subseteq \N$, which is an open subset. 
\end{corollary}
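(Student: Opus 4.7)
The plan is to deduce this from the preceding corollary for the Borel action by exploiting the decomposition $B = TU$, where $T$ denotes the standard maximal torus. The key observation is that $T$ acts on $\Ha_B$ by rescaling subdiagonal entries: for $t = \diag(t_1,\ldots,t_n) \in T$ and $H_B \in \Ha_B$, the conjugate $tH_Bt^{-1}$ remains strictly lower triangular and has subdiagonal entries $(t_i/t_{i+1})_i$, all non-zero, so it lies in $\Ha_U$. Conversely, given $H \in \Ha_U$ with subdiagonal $(c_i)_{i=1}^{n-1}$, the element $t_H := \diag(1, c_1, c_1 c_2, \ldots, c_1\cdots c_{n-1})$ satisfies $t_H^{-1} H t_H \in \Ha_B$. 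Hence $T.\Ha_B = \Ha_U$.

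From this identity openness follows at once. Using $B = UT$,
\[U.\Ha_U \;=\; U.(T.\Ha_B) \;=\; (UT).\Ha_B \;=\; B.\Ha_B \;=\; \N_B,\]
which is an open subset of $\N$ by the preceding $B$-corollary.

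For the uniqueness condition, suppose $H, H' \in \Ha_U$ satisfy $uHu^{-1} = H'$ for some $u \in U$. Then $H_B := t_H^{-1} H t_H$ and $H_B' := t_{H'}^{-1} H' t_{H'}$ both lie in $\Ha_B$ and in the same $B$-orbit, so by uniqueness of the $B$-normal form they coincide. Setting $s := t_{H'} t_H^{-1} \in T$, we obtain $sHs^{-1} = H' = uHu^{-1}$, so $s^{-1} u \in Z_{\GL_n}(H) \cap B$. Because $H \in \Ha_U$ has a cyclic vector (namely $e_1$, by non-vanishing of the subdiagonal entries), $H$ is regular nilpotent and $Z_{\GL_n}(H) = K[H]^\times$ consists of non-zero scalars plus strictly lower triangular matrices. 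As $s^{-1}u$ is upper triangular, this forces $s^{-1}u = \lambda I$ for some $\lambda \in K^*$; hence $u = \lambda s$ is diagonal, and being unipotent $u = I$. Consequently $H = H'$.

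The main obstacle is the uniqueness step; it hinges on the centralizer computation $Z_{\GL_n}(H) \cap B = K^* I$ for regular nilpotent $H \in \Ha_U$, which is what rules out any non-trivial $U$-action relating two different elements of $\Ha_U$. Everything else is a clean corollary of the $B$-version together with the $T$-rescaling bijection $\Ha_B \leftrightarrow \Ha_U$.
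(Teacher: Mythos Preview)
Your argument is correct. The paper itself gives no proof at all for this corollary; it merely states that both this statement and the preceding $B$-corollary are ``direct consequences'' of Lemma~\ref{pargenlem}. Your derivation via the decomposition $B=TU$, the explicit torus rescaling $T.\Ha_B=\Ha_U$, and the centralizer computation $Z_{\GL_n}(H)\cap B=K^*I$ for $H\in\Ha_U$ regular nilpotent is a clean and natural way to make this ``direct consequence'' precise, and it also shows the useful equality $\N_U=\N_B$.

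Two minor remarks. First, the subdiagonal entries of $tH_Bt^{-1}$ are $t_{i+1}/t_i$, not $t_i/t_{i+1}$ as you wrote; this is only a typo and does not affect the argument. Second, in the uniqueness step you could bypass the detour through $\Ha_B$: once you know $H$ is regular nilpotent (cyclic vector $e_1$), the equation $uHu^{-1}=H'$ with both $H,H'$ strictly lower triangular already forces the upper-triangular element $u^{-1}$ to lie in $K[H']^{\times}\cap B=K^*I$ by the same centralizer argument applied to a suitable conjugate---but your route through the $B$-normal form is equally valid and arguably more transparent.
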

Let us end the section by giving an example.
\begin{example}
 Consider the parabolic subgroup $P\subseteq K^{9\times 9}$ given by the block sizes $(3,4,2)$.
 Then the generic $P$-normal form described in Lemma \ref{pargenlem} is given by the matrices

\[ X_0 := \left\lbrace  \left(\begin{array}{ccc|cccc|cc}
0&0&0&0&0&0&0&0& 0\\
1&0&0&0&0&0&0&0&0 \\ 
0&1&0&0&0&0&0&0&0 \\ \hline
0&0&1&0&0&0&0&0&0 \\
a_{5,1}&a_{5,2}&0&1&0&0&0&0&0 \\ 
a_{6,1}&a_{6,2}&0&0&1&0&0&0&0 \\ 
a_{7,1}&a_{7,2}&0&0&0&1&0&0&0 \\ \hline
a_{8,1}&a_{8,2}&a_{8,3}&a_{8,4}&a_{8,5}&a_{8,6}&1&0&0 \\ 
a_{9,1}&a_{9,2}&a_{9,3}&a_{9,4}&a_{9,5}&a_{9,6}&0&1&0 \\         

        \end{array}\right) ,~ {\rm where}~ a_{i,j}\in K \right\rbrace 
  \]

\end{example}

\section{Generation of (semi-) invariant rings}\label{generation}
From now on, we consider the action of the Borel subgroup $B$ and the unipotent subgroup $U$ on the nilpotent cone $\N$. We define (semi-) invariants which generate the corresponding ring of (semi-) invariants (this will be shown in Theorem \ref{genersemi}). Let us start by defining those Borel-semi-invariants introduced in \cite{BoRe}.\\[1ex]
Given $i\in\{1,\punkte,n\}$, we denote by $\omega_i\colon B\rightarrow\textbf{G}_m$ the character which is defined by $\omega_i\left(g\right)=g_{i,i}$; the $\omega_i$ form a basis for the group of characters of $B$.\\[1ex]
Let us fix integers $s,t\in\mathbf{N}$. For $i\in\{1,\punkte,s\}$ and $j\in\{1,\punkte,t\}$, we fix integers $a_i,a'_j\in\{1,\punkte,n\}$ with \mbox{$a_1+\punkte+a_s=a'_1+\punkte+a'_t=:r$} and polynomials $\Pa_{i,j}\left(x\right)\in K[x]$.\\[1ex]
Let $N\in \N$, then for all such $i$ and $j$ we consider the submatrices 
$\Pa_{i,j}\left(N\right)_{(a_i,a'_j)}\in K^{a_i\times a'_j}$  and form the $r\times r$-block matrix \[N^{\Pa}\coloneqq\left(\Pa_{i,j}\left(N\right)_{(a_i,a'_j)}\right)_{i,j},~{\rm where}~\Pa\coloneqq \left(\left(a_i\right)_i,\left(a'_j\right)_j,\left(\Pa_{i,j}\right)_{i,j}\right).\] 
The following proposition can be found in \cite[Proposition 5.3]{BoRe}.
\begin{proposition}\label{semiprop}
 For every datum $\Pa$ as above, the function 
\[f^{\Pa}\colon \N~\rightarrow~ K; ~~N~\mapsto~\det\left(N^{\Pa}\right)\]
 defines a $B$-semi-invariant regular function on $\N$ of weight \[\sum_{i=1}^s\left(\omega_{n-a_i+1}+\punkte+\omega_n\right)-\sum_{j=1}^t\left(\omega_1+\punkte+\omega_{a'_j}\right).\]
\end{proposition}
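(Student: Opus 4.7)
The statement is essentially a computation: evaluate $f^{\Pa}$ on a conjugate $gNg^{-1}$ and track how $g \in B$ transforms each constituent block. The plan is to exploit the upper-triangularity of $g$ precisely because the submatrices involved select the \emph{last} $a_i$ rows and \emph{first} $a'_j$ columns, i.e., the two index ranges that mesh with an upper-triangular block decomposition.

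First I would observe that for any polynomial $\Pa_{i,j}$ we have $\Pa_{i,j}(gNg^{-1}) = g\,\Pa_{i,j}(N)\,g^{-1}$. So everything reduces to understanding how the operation $M \mapsto M_{(a,b)}$ (last $a$ rows, first $b$ columns) behaves under $M \mapsto gMg^{-1}$ for $g \in B$. Writing the projections $\pi_a = [0\ |\ I_a] \in K^{a \times n}$ and $\iota_b = \binom{I_b}{0} \in K^{n \times b}$, I have $M_{(a,b)} = \pi_a M \iota_b$. Because $g$ is upper triangular, the bottom $a$ rows of $g$ vanish in their first $n-a$ entries, yielding
\[
\pi_a g = g^{\mathrm{bot}}_a \pi_a, \qquad \text{where } g^{\mathrm{bot}}_a \text{ is the bottom-right } a\times a \text{ block of } g,
\]
and $g^{\mathrm{bot}}_a$ is upper triangular with diagonal $(g_{n-a+1,n-a+1},\ldots,g_{n,n})$. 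Dually, since $g^{-1}$ is also upper triangular, its first $b$ columns vanish below row $b$, giving
\[
g^{-1} \iota_b = \iota_b (g^{\mathrm{top}}_b)^{-1}, \qquad \text{where } g^{\mathrm{top}}_b \text{ is the top-left } b\times b \text{ block of } g.
\]
Combining, $(gMg^{-1})_{(a,b)} = g^{\mathrm{bot}}_a \cdot M_{(a,b)} \cdot (g^{\mathrm{top}}_b)^{-1}$.

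Applying this block by block to $N^{\Pa}$, I obtain
\[
(gNg^{-1})^{\Pa} = D_{\mathrm{left}}(g)\cdot N^{\Pa} \cdot D_{\mathrm{right}}(g),
\]
where $D_{\mathrm{left}}(g) = \diag(g^{\mathrm{bot}}_{a_1},\ldots, g^{\mathrm{bot}}_{a_s})$ and $D_{\mathrm{right}}(g) = \diag((g^{\mathrm{top}}_{a'_1})^{-1},\ldots,(g^{\mathrm{top}}_{a'_t})^{-1})$, both of total size $r \times r$ thanks to $\sum a_i = \sum a'_j = r$. Taking determinants and using upper-triangularity of each corner block,
\[
\det D_{\mathrm{left}}(g) = \prod_{i=1}^{s}\prod_{k=n-a_i+1}^{n} g_{k,k} = \prod_{i=1}^{s}(\omega_{n-a_i+1}\cdots\omega_n)(g),
\]
and analogously $\det D_{\mathrm{right}}(g) = \prod_{j=1}^{t}(\omega_1\cdots\omega_{a'_j})(g)^{-1}$, producing exactly the claimed character. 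Finally, $f^{\Pa}$ is regular as a polynomial in the matrix entries of $N$ (composition of polynomial maps with a determinant).

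There is no real obstacle; the only subtle point is making sure the two "edge" identities $\pi_a g = g^{\mathrm{bot}}_a \pi_a$ and $g^{-1}\iota_b = \iota_b (g^{\mathrm{top}}_b)^{-1}$ are used in the correct order and are valid block-by-block, which is a direct consequence of the choice of "last rows" on the left and "first columns" on the right together with upper-triangularity of $B$. It is precisely this compatibility that forces the functions $f^{\Pa}$ to be semi-invariants rather than mere rational maps.
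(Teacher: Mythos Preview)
Your argument is correct: the two edge identities $\pi_a g = g^{\mathrm{bot}}_a \pi_a$ and $g^{-1}\iota_b = \iota_b (g^{\mathrm{top}}_b)^{-1}$ hold for upper-triangular $g$ exactly as you say (for the second one you are implicitly using that the top-left $b\times b$ block of $g^{-1}$ equals $(g^{\mathrm{top}}_b)^{-1}$, which is true for block upper-triangular matrices), and the rest is bookkeeping. The paper itself does not give a proof of this proposition at all; it simply cites \cite[Proposition 5.3]{BoRe}. Your direct computation is precisely the natural argument and is in fact what one finds in that reference, so there is nothing to compare.
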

Note that the function $f^{\Pa}$ is also a $U$-invariant regular function on $\N$.
\begin{theorem}\label{genersemi}
The semi-invariant ring $K[\N]^B_*$ is generated by the semi-invariants of Proposition \ref{semiprop}.
\end{theorem}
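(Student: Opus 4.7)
The plan is to translate the question via Lemma \ref{bijection} applied to $P = B$ and then invoke Theorem \ref{Schosemi}.

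For $P = B$ we have $\dfs = (1, 2, \ldots, n)$, so Lemma \ref{bijection} identifies $R^{\inj}_\dfs(\Q_n, I) \cong \GL_\dfs \times^B \N$. Identifying $\GL_\dfs = \GL_1 \times \cdots \times \GL_n$, the copy of $B$ inside $\GL_\dfs$ that stabilises the base-point flag sends $h \in B$ to the tuple of its upper-left square submatrices $(h_{[1:i,\,1:i]})_{i=1}^n$; a direct computation shows that restricting a character $\sum_k m_k \det_k$ of $\GL_\dfs$ to this copy yields $\sum_i \bigl(\sum_{k \geq i} m_k\bigr)\omega_i \in X(B)$, which is a bijection of character groups. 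The standard correspondence for associated fibre bundles then produces an isomorphism
\[ K[R^{\inj}_\dfs(\Q_n, I)]^{\GL_\dfs}_* \;\cong\; K[\N]^B_* \]
matching a $\chi$-semi-invariant piece on the left with the $\chi|_B$-piece on the right.

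Next I would lift the spanning problem across two inclusions. First, $R_\dfs(\Q_n, I) \cong \prod_{i=1}^{n-1} \Hom(K^i, K^{i+1}) \times \N$ is the product of an affine space with the normal variety $\N$ and is therefore normal, and for each $i$ the locus in $\Hom(K^i, K^{i+1})$ where $M_{\alpha_i}$ fails to be injective is the determinantal variety of $(i+1) \times i$-matrices of rank at most $i-1$, of codimension $2$. Hence $R_\dfs(\Q_n, I) \setminus R^{\inj}_\dfs(\Q_n, I)$ has codimension at least $2$ and Hartogs' extension identifies $K[R^{\inj}_\dfs(\Q_n, I)]^{\GL_\dfs}_*$ with $K[R_\dfs(\Q_n, I)]^{\GL_\dfs}_*$. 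Second, reductivity of $\GL_\dfs$ applied to the closed inclusion $R_\dfs(\Q_n, I) \hookrightarrow R_\dfs(\Q_n)$ gives surjectivity of the restriction map on each $\chi$-isotypic component, so Theorem \ref{Schosemi} implies that the restrictions to $\N$ of the determinantal semi-invariants $f_\phi$ already span $K[\N]^B_*$.

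It remains to match these restrictions with the $f^\Pa$ of Proposition \ref{semiprop}. A morphism in $\add\Q_n$ is a matrix of paths; in $\Q_n$ a path into a vertex $j < n$ is the unique path $\alpha_{j-1} \cdots \alpha_i$, while a path into $n$ has the form $P(\alpha) \cdot \alpha_{n-1} \cdots \alpha_i$ with $P \in K[x]$. At the fibre representation $M^N$ the first type of block evaluates to the constant standard inclusion $\epsilon^{j,i} \colon K^i \hookrightarrow K^j$, and the second to $P(N) \cdot \epsilon^{n,i}$, that is, the first $i$ columns of $P(N)$. Given a datum $\Pa = ((a_i)_i, (a'_j)_j, (\Pa_{i,j})_{i,j})$ with $r = \sum_i a_i = \sum_j a'_j$, construct $\phi$ with domain $\bigoplus_{j=1}^t O(a'_j) \oplus \bigoplus_{i=1}^s O(n - a_i)$ and codomain $\bigoplus_{i=1}^s O(n)$, whose block from $O(a'_j)$ into the $i$-th copy of $O(n)$ is $\Pa_{i,j}(\alpha) \cdot \alpha_{n-1} \cdots \alpha_{a'_j}$, whose diagonal block from $O(n - a_i)$ to the $i$-th copy of $O(n)$ is $\alpha_{n-1} \cdots \alpha_{n - a_i}$, and whose remaining blocks from the $O(n - a_i)$ summands vanish. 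The $\sum_i (n - a_i)$ columns of $M^N(\phi)$ produced by the $O(n - a_i)$-summands are then standard basis vectors of the codomain $\bigoplus_i K^n$, and Laplace expansion along them deletes precisely the top $n - a_i$ rows of the $i$-th codomain block, leaving the $r \times r$ block matrix whose $(i,j)$-block is the last $a_i$ rows and first $a'_j$ columns of $\Pa_{i,j}(N)$, so that $\det M^N(\phi) = \pm\,f^\Pa(N)$. Conversely, for an arbitrary $\phi$ the same Laplace expansion along the rows indexed by codomain summands $O(\tilde c_k)$ with $\tilde c_k < n$ writes $f_\phi|_\N$ as a linear combination of $f^\Pa$'s, giving a two-sided span equality.

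The main technical obstacle I anticipate is this final combinatorial step: organising the block structure of a general $\phi$, executing the Laplace expansion along the constant inclusion columns to isolate the polynomial-in-$N$ part, and tracking signs carefully enough to pass rigorously from the Schofield--van den Bergh span to the $f^\Pa$ span.
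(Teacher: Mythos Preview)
Your proposal is correct and follows essentially the same route as the paper: translate to $R_{\dfs}(\Q_n)$ via the associated fibre bundle of Lemma \ref{bijection}, use reductivity of $\GL_{\dfs}$ plus the codimension-$\geq 2$ argument to identify the semi-invariant rings, apply Theorem \ref{Schosemi}, and then match the determinantal semi-invariants $f_\phi$ with the $f^{\Pa}$. The one place where the executions diverge is the final matching. The paper observes that one may assume without loss of generality that the codomain of $\phi$ is a direct sum of copies of $O(n)$ only (i.e.\ $y_1=\cdots=y_{n-1}=0$), after which each block of $M^N(\phi)$ is already of the form $P(N)_{(n,a'_j)}$ and the equality $f^\phi=f^{\Pa}$ is read off directly; your version instead keeps a general $\phi$ and proposes to reach the $f^{\Pa}$ form by Laplace-expanding along the constant (that is, $N$-independent) rows coming from codomain summands $O(j)$ with $j<n$. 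These are two implementations of the same observation---that only the $O(n)$-codomain blocks depend on $N$---and your acknowledged ``technical obstacle'' is precisely what the paper bypasses with its WLOG reduction. Your extra forward construction (building a $\phi$ from a datum $\Pa$ using auxiliary $O(n-a_i)$ summands) is not needed for the theorem, but it is correct and gives the pleasant by-product that the $f^{\Pa}$ and the restricted $f_\phi$ span the same space, not merely that one contains the other.
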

\begin{proof}
First, we show $K[R^{\inj}_{\dfs}(\Q_n,I_x)]^{\GL_{\dfs}}_*\subseteq K[R_{\dfs}(\Q_n)]^{\GL_{\dfs}}_*$:\\[1ex]
The surjection $K[R_{\dfs}(\Q_n)]\rightarrow K[R_{\dfs}(\Q_n,I_x)]$ induces a surjection on the corresponding semi-invariant rings, since $\GL_{\dfs}$ is reductive. Furthermore, the codimension of $R_{\dfs}(\Q_n,I_x)\backslash R^{\inj}_{\dfs}(\Q_n,I_x)$ in $R_{\dfs}(\Q_n,I_x)$ is greater or equal than $2$, which yields the claim.\\[1ex] 
Following Lemma \ref{bijection}, we see that each $B$-semi-invariant $f$ on $\N$ is uniquely lifted to a $\GL_{\dfs}$-semi-invariant in $K[R^{\inj}_{\dfs}(\Q_n,I_x)]$. Theorem \ref{Schosemi} yields that $K[R_{\dfs}(\Q_n)]^{\GL_{\dfs}}_*$ is spanned by the determinantal semi-invariants $f_{\phi}$ defined in Section \ref{transsect}. Therefore, it suffices to prove that each determinantal semi-invariant, restricted to $R^{\inj}_{\dfs}(\Q_n,I_x)$, corresponds to one of the $B$-semi-invariants of Proposition \ref{semiprop}.\\[1ex]
Let us fix an arbitrary morphism in $\add \Q$, say 
\[\phi\colon \bigoplus_{j=1}^n O(j)^{x_j}\rightarrow \bigoplus_{i=1}^n O(i)^{y_i},\] such that
$h\coloneqq \sum_{j\in \Q_0}x_j \cdot j=\sum_{i\in \Q_0}y_i \cdot i.$  Then, by Section \ref{transsect}, we obtain a determinantal semi-invariant 
$f_{\phi}$.\\[1ex] 
The homomorphism spaces $P(j,i)$ between two objects $O(j)$ and $O(i)$ in $\add\Q$ are generated as $K$-vector spaces by
\[P(j,i)=\left\lbrace 
\begin{array}{ll}
0, & \textrm{if}~j>i; \\[1ex] 
\left\langle \rho_{j,i}\coloneqq\alpha_{i-1}\cdots \alpha_j\right\rangle, & \textrm{if}~j\leq i<n; \\[1ex] 
\left\langle \rho_{j,n}^{(k)}\coloneqq\alpha^k\alpha_{n-1}\cdots \alpha_j \mid k\in \mathbf{N}\cup\{0\}\right\rangle, & \textrm{if}~i=n.
                     \end{array}\right. \] 
The morphism $\phi$ is given by a $\sum_{i=1}^n y_i \times \sum_{j=1}^n x_j$-matrix $H$ with entries being morphisms between objects in $\add\Q$. 
We can view the matrix $H$ as an $n\times n$ block matrix $H=(H_{i,j})_{1\leq i,j\leq n}$ with $H_{i,j}\in K^{y_i\times x_j}$ for $i,j\in\{1,\punkte,  n\}$. Then
\[\left(H_{i,j}\right)_{k,l}= \left\lbrace
\begin{array}{ll}
0, & \textrm{if} ~ i<j; \\ 
\lambda^{k,l}_{i,j}\cdot \rho_{j,i}, & \textrm{for~some~} \lambda^{k,l}_{i,j}\in K~\textrm{if}~ j\leq i<n;\\ 
 \sum\limits_{h=0}^{\infty}\left(\lambda^{k,l}_{n,j}\right)_h\cdot \rho^{(h)}_{j,n}, &\textrm{for~some~}\left(\lambda^{k,l}_{n,j}\right)_h\in K~ \textrm{if}~ j\leq i=n.
                                        \end{array}\right.  \] 
Given an arbitrary matrix $N\in \N$, we reconsider the representation $M^N$ defined in Lemma \ref{bijection}.
 Since $\GL_{\dfs}$ acts transitively on $R_{\dfs}^{\inj}(\Q')$ with $\Q'$ being the linearly oriented quiver of Dynkin type $A_n$, we can examine the restricted semi-invariant on these representations $M^N$.\\[1ex]
The $B$-semi-invariant of $\N$ associated to $f_{\phi}$ via the translation of Lemma \ref{bijection} is given by
\[f^{\phi}\colon \N \rightarrow K;~ N \mapsto \det M^N(\phi).\]
The matrix 
\[M^N(\phi)=\left(M^N_{i,j}\right)_{1\leq i,j\leq n}\in K^{h\times h}\]
 is given as a block matrix
where each block 
\[M^N_{i,j}=  \left(\left(M^N_{i,j}\right)_{k,l}\right)_{\begin{subarray}{l}
1\leq k\leq y_i \\ 
1\leq l\leq x_j \end{subarray}}\in K^{iy_i\times jx_j}\]
is again a block matrix. The blocks of $M^N_{i,j}$ are given by
\[K^{i\times j} \ni \left(M^N_{i,j}\right)_{k,l}=\left\lbrace \begin{array}{ll}
0, & \textrm{if} ~ i<j; \\ 
\lambda^{k,l}_{i,j}\cdot E^{(i)}_{(i,j)}, & \textrm{if}~ j\leq i<n;\\ 
 \sum\limits_{h=0}^{\infty}\left(\lambda^{k,l}_{n,j}\right)_h\cdot \left(N^h\right)_{(n,j)}, & \textrm{if}~ j\leq i=n; 
                                        \end{array}\right.\]
such that $E^{(i)}\in K^{i\times i}$ is the identity matrix.
Note that if  $i,j\in\{1,\punkte,n\}$ and $i<n$, then $M_{i,j}^N=M_{i,j}^{N'}=:M_{i,j}$ for every pair of matrices $N,N'\in \N$.\\[1ex]
We can without loss of generality assume $y_1=\punkte=y_{n-1}=0$ which can, for example, be seen by induction on the index $i$ of $y_i$. This assumption is not necessary for the proof, but will shorten the remaining argumentation. Let us
define
 \[a\coloneqq(\underbrace{n,\punkte,n}_{=:a_{1},\punkte,a_{y_n}})~~~
\textrm{and}~~~a'\coloneqq(\underbrace{1,\punkte,1}_{=:a'_{1,1},\punkte,a'_{1,x_1}},\underbrace{2,\punkte,2}_{=:a'_{2,1},\punkte,a'_{2,x_2}},\punkte,\underbrace{n,\punkte,n}_{=:a'_{n,1},\punkte,a'_{n,x_n}}).\]
Furthermore, define for $j\in\{1,\punkte, n\}$ and for each pair of integers $k\in \{1,\punkte, y_n\}$ and $l\in\{1,\punkte, x_j\}$ the polynomial
 \[\Pa^{(k,l)}_{j}\coloneqq\sum_{h=0}^{\infty}\left(\lambda^{k,l}_{n,j}\right)_h\cdot X^{h}. \]
Let us denote $\Pa\coloneqq\left(a,a',\left(P_{j}^{(k,l)}\right)_{j,k,l}\right)$ and let $N\in \N$; it suffices to show $f^{\Pa}(N)=f^{\phi}(N)$:
\begin{align}
     f^{\phi}(N)~  ~=~\det M^N(\phi)&  ~=~ \det \left(M^N_{n,j}\right)_{1\leq j\leq n} ~=~ \det \left( \left(\left(M^N_{n,j}\right)_{k,l}\right)_{\begin{subarray}{l}
1\leq k\leq y_n \\ 
1\leq l\leq x_j \end{subarray}}\right)_{1\leq j\leq n}\nonumber\\
           & ~=~ \det \left(\left(\sum\limits_{h=0}^{\infty}\left(\lambda^{k,l}_{n,j}\right)_h\cdot \left(N^h\right)_{(n,j)}\right)_{\begin{subarray}{l}
1\leq k\leq y_n \\ 
1\leq l\leq x_j \end{subarray}}\right)_{1\leq j\leq n} \nonumber\\
 &~=~ \det \left(\left(\Pa^{\left(k,l\right)}_{j}(N)_{(n,j)}\right)_{\begin{subarray}{l}
1\leq k\leq y_n \\ 
1\leq l\leq x_j \end{subarray}}\right)_{1\leq j\leq n}~=~ \det N^{\Pa}~=~ f^{\Pa}(N) .\qedhere \nonumber
\end{align}
\end{proof}
\begin{corollary}\label{Ugeninv}
 The $U$-invariant ring $K[\N]^U$ is spanned by the induced $U$-invariants.
\end{corollary}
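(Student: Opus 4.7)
The plan is to deduce Corollary \ref{Ugeninv} from Theorem \ref{genersemi} by identifying $K[\N]^U$ with the full semi-invariant ring $K[\N]^B_*$ via the $T$-weight decomposition coming from the normalizer $B = T\ltimes U$.

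First, I would set up the decomposition. Let $T\subset B$ denote the maximal torus of diagonal matrices, so that $B=T\ltimes U$ with $U$ normal in $B$. Then $T=B/U$ acts on the subring $K[\N]^U\subseteq K[\N]$, and since $T$ is a torus acting rationally we obtain the weight-space decomposition
\[K[\N]^U \;=\; \bigoplus_{\chi\in X(T)} \bigl(K[\N]^U\bigr)_{\chi}.\]

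Second, I would match weight spaces with semi-invariant spaces. The characters $\omega_1,\punkte,\omega_n$ that generate $X(B)$ are all trivial on $U$, so every character of $B$ factors through $B\rightarrow B/U = T$, yielding $X(B)=X(T)$. A regular function $f\in K[\N]$ lies in $(K[\N]^U)_\chi$ precisely when it is $U$-invariant and $T$ acts on it by $\chi$, which is equivalent to saying that $f$ is a $B$-semi-invariant of weight $\chi$. Therefore
\[K[\N]^U \;=\; \bigoplus_{\chi\in X(B)} K[\N]^B_{\chi} \;=\; K[\N]^B_*\]
as subrings of $K[\N]$.

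Third, I would invoke Theorem \ref{genersemi} together with the remark following Proposition \ref{semiprop}. By the theorem, the right-hand side is generated as a $K$-algebra by the semi-invariants $f^{\Pa}$ of Proposition \ref{semiprop}; by the remark, each $f^{\Pa}$ is already a $U$-invariant regular function on $\N$. Reading the $f^{\Pa}$ as elements of $K[\N]^U$ therefore furnishes the desired generating set, proving the corollary.

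There is no real obstacle here; the argument is essentially bookkeeping once the normality of $U$ in $B$ is used. The only point requiring a moment of care is the identification $X(B)=X(T)$, which guarantees that the $T$-weights appearing in $K[\N]^U$ are genuinely the weights of $B$-semi-invariants and not new characters, so that the decomposition of $K[\N]^U$ into $T$-weight spaces coincides exactly with the decomposition of $K[\N]^B_*$ into semi-invariant spaces indexed by $X(B)$.
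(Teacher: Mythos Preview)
Your argument is correct and is precisely the intended derivation: the paper states the corollary without proof, and the only natural way to read it as a consequence of Theorem \ref{genersemi} is exactly your identification $K[\N]^U=\bigoplus_{\chi\in X(T)}(K[\N]^U)_\chi=K[\N]^B_*$ via the decomposition $B=T\ltimes U$ and $X(B)=X(T)$. One small remark on wording: Theorem \ref{genersemi} (through Theorem \ref{Schosemi}) actually yields that the $f^{\Pa}$ \emph{span} $K[\N]^B_*$ as a $K$-vector space, not merely generate it as an algebra, which matches the phrasing ``spanned'' in the corollary; your conclusion goes through unchanged.
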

\section[About the algebraic U-quotient of the nilpotent cone]{About the algebraic $U$-quotient of the nilpotent cone}\label{Uquot}
We have seen that the $U$-invariant ring $K[\N]^U$ is spanned by the functions defined in Proposition \ref{semiprop}. We prove a quotient criterion in the next subsection which will help to provide the explicit structure of the $U$-invariant rings for the cases $n=2,3$. 
\subsection{A quotient criterion}
Let $G$ be a reductive algebraic group and $U$ be a unipotent subgroup. Then $U$ acts on $G$ by right multiplication and Lemma \ref{Uinvfin} states that the $U$-invariant ring $K[G]^U$ is finitely generated as a $K$-algebra. Thus, an algebraic $U$-quotient of $G$, namely $G\quot U\coloneqq\Spec K[G]^U$, exists together with a dominant morphism $\pi_{G\quot U}\colon G\rightarrow G\quot U$ which is in general not surjective. Note that there is an element $\overline{e}\in G\quot U$, such that  $\pi_{G\quot U}(g)=g\overline{e}$ for all $g\in G$.\\[1ex]
The group $G$ acts on $G\quot U$ by left multiplication. Let $X$ be an affine $G$-variety and consider the diagonal operation of $G$ on the affine variety $G\quot U\times X$; we consider the natural $G$-equivariant morphism $\iota: X\rightarrow G\quot U\times X$.\\
 Let $\pi'\colon G\quot U\times X\rightarrow (G\quot U\times X)\quot G\coloneqq\Spec K[G\quot U\times X]^G$ be the associated algebraic $G$-quotient, then we obtain a morphism 
\[\rho\coloneqq \pi'\circ \iota: X\rightarrow  (G\quot U\times X)\quot G.\]
The morphism $\rho$ induces an isomorphism 
$\rho^*\colon (K[G]^U\otimes K[X])^G \rightarrow K[X]^U.$\\
Thus, $X\quot U\cong (G\quot U\times X)\quot G$ and
\[K[X]^U \cong (K[G\quot U\times X])^G\cong (K[G\quot U]\otimes K[X])^G\cong (K[G]^U\otimes K[X])^G.\]
Let $Y$ be an affine $G$-variety and let $\mu': G\quot U\times X\rightarrow Y$ be a $G$-invariant morphism, together with a dominant $U$-invariant morphism of affine varieties 
\[\mu\colon X \rightarrow Y;~x  \mapsto (f_{1}(x),\punkte, f_s(x)),\] such that $\mu'\circ \iota = \mu$.\\[1ex]
In this setting, we obtain the following criterion for $\mu$ to be an algebraic $U$-quotient.
\begin{lemma}\label{critU}Assume that
\begin{enumerate}
 \item[(1.)] $Y$ is normal,
 \item[(2.)] $\mu$ separates the $U$-orbits generically, that is, there is an open subset $Y_U\subseteq Y$, such that $\mu(x)\neq \mu(x')$ for all $x,x'\in X_U\coloneqq\mu^{-1}(Y_U)$, and
\item[(3.)] $\codim_Y(\overline{Y\backslash Y_U})\geq 2$ or $\mu$ is surjective.
\end{enumerate}
Then $\mu$ is an algebraic $U$-quotient of $X$, that is, $Y\cong X\quot U$.
\end{lemma}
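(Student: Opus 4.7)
The plan is to apply the reductive quotient criterion, Theorem \ref{criterion}, to the $G$-invariant morphism $\mu'\colon G\quot U \times X \to Y$. The discussion preceding the lemma already establishes $X\quot U \cong (G\quot U \times X)\quot G$ via the isomorphism $\rho^{*}$, so once $\mu'$ is shown to be an algebraic $G$-quotient, the identification $Y \cong X\quot U$ (with $\mu$ as the induced quotient map, via $\mu = \mu'\circ\iota$) follows.

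I would verify the three conditions of Theorem \ref{criterion} for $\mu'$ in turn. Condition (1) is exactly hypothesis (1). For condition (2), the factorisation $\mu = \mu'\circ\iota$ gives $\mu(X) \subseteq \mu'(G\quot U \times X)$; combined with the containment $Y_U \subseteq \mu(X)$ that is implicit in the orbit-separation hypothesis (whose fibres over $Y_U$ are non-empty single $U$-orbits), one gets $Y \setminus \mu'(G\quot U\times X) \subseteq Y \setminus Y_U$, so the codimension bound or surjectivity transfers directly from $\mu$ to $\mu'$.

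The substantive work is condition (3): producing a non-empty open $Y_0 \subseteq Y$ over which every fibre of $\mu'$ contains exactly one closed $G$-orbit. The key observation is that the $G$-saturation $G\cdot\iota(X)$ coincides with $G\bar e \times X$, where $G\bar e \subseteq G\quot U$ is the orbit of $\bar e$ under left multiplication (equivalently, the image of $\pi_{G\quot U}$). As the preimage of this open subset under the first projection, $G\bar e \times X$ is open and $G$-stable in $G\quot U \times X$. Moreover, $G$-orbits inside $G\bar e \times X$ correspond bijectively to $U$-orbits in $X$ via $\iota$, because $\mathrm{Stab}_{G}(\bar e)=U$. Hence for $y \in Y_U$, hypothesis (2) — which says $\mu^{-1}(y)=U.x_y$ is a single $U$-orbit — yields
\[(\mu')^{-1}(y) \cap (G\bar e \times X) = G\cdot(\bar e, x_y),\]
a single $G$-orbit in the open part of the fibre.

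The main obstacle is to upgrade this to the statement that $G\cdot(\bar e, x_y)$ is the \emph{unique closed} $G$-orbit in the entire fibre $(\mu')^{-1}(y)$, rather than merely in its intersection with $G\bar e \times X$. I would address this by shrinking $Y_U$: using constructibility of orbit images together with semicontinuity of fibre and orbit dimensions, one locates a non-empty open $Y_0 \subseteq Y_U$ on which $G\cdot(\bar e, x_y)$ attains maximal dimension in $(\mu')^{-1}(y)$ and is closed there — and is therefore the unique closed $G$-orbit in the fibre, as any other closed orbit would have to lie in its closure. With condition (3) secured, Theorem \ref{criterion} gives that $\mu'$ is an algebraic $G$-quotient, and combining with $X\quot U \cong (G\quot U \times X)\quot G$ completes the proof.
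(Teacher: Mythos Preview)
Your overall strategy coincides with the paper's: reduce to Theorem~\ref{criterion} applied to $\mu'\colon G\quot U\times X\to Y$, and then transport the conclusion back via $X\quot U\cong (G\quot U\times X)\quot G$. Your treatment of conditions (1) and (2) is essentially the same as the paper's.

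The gap is in your verification of condition~(3). You correctly identify that, for $y\in Y_U$, the intersection $(\mu')^{-1}(y)\cap (G\bar e\times X)$ is the single $G$-orbit $G\cdot(\bar e,x_y)$. But the step from ``$G\cdot(\bar e,x_y)$ is closed and of maximal dimension in $(\mu')^{-1}(y)$'' to ``it is the \emph{unique} closed $G$-orbit in $(\mu')^{-1}(y)$'' does not follow: maximality of dimension together with closedness does not force other closed orbits to lie in its closure. That implication would hold if the fibre were irreducible (so that the top-dimensional orbit is dense), but you have not argued this, and generic irreducibility of fibres is not automatic. The vague appeal to constructibility and semicontinuity does not close this gap.

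The paper circumvents this issue altogether by factoring $\mu'=\widetilde{\mu'}\circ\pi'$ through the algebraic $G$-quotient $\pi'\colon G\quot U\times X\to (G\quot U\times X)\quot G$. Since each fibre of $\pi'$ contains exactly one closed $G$-orbit, the closed $G$-orbits in $(\mu')^{-1}(y)$ are in bijection with the points of $\widetilde{\mu'}^{\,-1}(y)$; hence condition~(3) reduces to showing that $\widetilde{\mu'}$ is generically injective. That, in turn, follows from hypothesis~(2) via the identification $(G\quot U\times X)\quot G\cong X\quot U$ and the surjection $G.(\{\bar e\}\times X_U)\to Y_U$. This factorisation is the missing idea in your argument.
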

\begin{proof}
Let $g_{1},\punkte,g_s\in K[G\quot U\times X]^G$, such that $\rho^*(g_{i})=f_{i}$ for all $i$.\\[1ex]
Clearly,
\[2\leq \codim_Y(\overline{Y\backslash \mu(X)})\leq\codim_{Y}(\overline{Y\backslash \mu'(G\quot U\times X)}).\]
The morphism $\mu'$ separates the $G$-orbits in  $ G\quot U\times X$ generically (that is, in $Y_U$):\\
If $x,x'\in G.(\{\overline{e}\}\times X_U)$, then $\mu'(x)\neq \mu'(x')$. The morphism $\mu'$ restricts to a surjection $G.(\{\overline{e}\}\times X_U)\rightarrow Y_U$, furthermore, the algebraic quotient $\pi'$ is surjective and there exists a morphism $\tilde{\mu'}: (G\quot U\times X)\quot G\rightarrow Y$, such that $\tilde{\mu'}\circ \pi'=\mu'$. Then $Y_U\subseteq \im(\tilde{\mu'})$ and, since each fibre of $\pi'$  contains exactly one closed $G$-orbit, we have shown that generically each fibre of $\mu'$ contains a unique   closed orbit. \\[1ex]
Thus, Theorem \ref{criterion} yields that $\pi\colon G\quot U\times X\rightarrow Y$ is an algebraic $G$-quotient. Since $f_{i}$ and $g_{i}$ correspond to each other via the isomorphism $ \rho^*\colon (K[G]^U\otimes K[X])^G \rightarrow K[X]^U,$ the morphism $\mu\colon X\rightarrow Y$ is an algebraic $U$-quotient of $X$.
\end{proof}
We are now able to give explicit descriptions of algebraic $U$-quotients of the nilpotent cone in case $n$ equals $2$ or $3$.
\begin{example}\label{Utwo}
 We consider the case $n=2$. In this case, the $U$-normal form of Section \ref{gnfsect} is given by matrices  
 \[H_{x}\coloneqq\left( \begin{array}{ll}
0 &0  \\ 
x & 0
\end{array}\right)\]
 where $x\in K^*$. Then by Proposition \ref{semiprop}, we define the $U$-invariant $f_{2,1}$ by $f_{2,1}(N)=N_{2,1}$ for $N=(N_{i,j})_{i,j}\in\N$.\\[1ex]
 The morphism 
\[\mu\colon \N\rightarrow \textbf{A}^1= \Spec K[f_{2,1}];~N~\mapsto f_{2,1}(N)\]
is an algebraic $U$-quotient of $\N$:\\[1ex]
Clearly, the variety $\textbf{A}^1$ is normal and $\mu$ separates the $U$-orbits in the open subset $\N_U\subseteq \N$.
 Since $\mu$ is surjective, Lemma \ref{critU} yields the claim. We have, therefore, proven 
\[K[\N]^U=K[f_{2,1}].\]
\end{example}
The case $n=3$ is slightly more complex, but can still be handled by making use of Lemma \ref{critU}.
\begin{example}\label{Uthree}
 In  case $n=3$,  the $U$-normal forms are given by matrices \[H=\left( \begin{array}{lll}
0&0 &0  \\ 
x_1&0 & 0\\
x& x_2& 0
\end{array}\right),~x_1,x_2\in K^*.\] Following Proposition \ref{semiprop}, we define certain $U$-invariants; consider $N=(N_{i,j})_{i,j}\in \N$, then $f_{3,1}(N)= N_{3,1}$, ${\det}_1(N)= N_{2,1}N_{3,2}-N_{2,2}N_{3,1}$ and ${\det}_2(N) =N_{1,1}N_{3,1}+N_{2,1}N_{3,2}+N_{3,1}N_{3,3}$. Note that the equality $\Det_1(N)=\Det_2(N)$ holds true for all $N\in\N$ due to the nilpotency conditions.\\[1ex]
Furthermore, we define a $U$-invariant $f_1$ given by the datum $\Pa=((2),(1,1), (x,x^2))$, thus, 
 $f_{1}(N)= N_{2,1}\cdot {\det}_1+N_{3,1}\cdot(N_{2,1}N_{3,3}-N_{3,1}N_{2,3})$.\\[1ex]
And the $U$-invariant $f_2$ given by the datum $\Pa=((1,1),(2),(x^2,x)$, thus,
 $f_{2}(N) = N_{3,2}\cdot {\det}_1+N_{3,1}\cdot(N_{1,1}N_{3,2}-N_{1,2}N_{3,1})$. Then $f_1\cdot f_2=\Det_1^3$ holds true in $\N$.\\[1ex]
\textbf{Claim:}
 The morphism 
 \begin{align}
 \mu\colon& ~\N\rightarrow \textbf{A}^1\times \Spec \dfrac{K[X_1,X_2,Z]}{\left( X_1X_2=Z^3\right)}=: Y\nonumber\\
&~N~\mapsto (f_{3,1}(N), f_1(N),f_2(N), {\det}_1(N)) \nonumber
\end{align}
is an algebraic $U$-quotient of $\N$.\\[2ex]
The affine variety $Y$ is normal as the product of $\textbf{A}^1$ and a normal toric affine variety induced by the strongly convex rational polyhedral cone \[\sigma\coloneqq \Cone\left(
\left(\begin{array}{l}
1 \\ 
1     \end{array}\right),\left(\begin{array}{l}
1 \\ 
2     \end{array}\right),\left(\begin{array}{l}
2 \\ 
1     \end{array}\right)
 \right). \] 
The morphism $\mu$ separates the $U$-orbits in the open subset $\N_U\subseteq \N$ as can be proved by a direct calculation.\\[1ex]
Furthermore, $\codim_{Y}(\overline{Y\backslash \mu(\N)})\geq 2$, since $\textbf{A}^1\times  X'\subset \mu(\N)$ and $(s,t,u,v)\in \mu(\N)$ whenever either $s$, $t$ or $u$ equals zero and $v^3=ut$.
Lemma \ref{critU} yields the claim.\\[1ex]
We have proved 
\[K[\N]^U= K[f_{3,1}, f_1, f_2, {\det}_1]/\left( f_1\cdot f_2= {\det}_1^3\right).\]
\end{example}

\subsection{Toric invariants}\label{toricsect}
As the case $n=3$ suggests, there is a toric variety closely related to $\N\quot U$.\\[1ex]
The idea of a generalization is the following: By considering a special type of $U$-invariants, so-called toric invariants, we define a toric variety $X$ together with a dominant morphism
$ \N\quot U\rightarrow X$, such that the generic fibres are affine spaces of the same dimension.\\[1ex]
Given a matrix $H=(x_{i,j})_{i,j}\in \Ha_U$, we denote $x_i\coloneqq x_{i+1,i}$ and define its \textit{toric part} $H_{\tor}\in K^{n\times n}$ by 
\[(H_{\tor})_{i,j}\coloneqq\left\lbrace
\begin{array}{ll}
x_{i}, & \textrm{if}~i=j+1; \\ 
0, & \textrm{otherwise}.
                                   \end{array}\right. \] 
Let us fix an invariant $f$ of size $r$. It is called \textit{toric} if $f(H)=f(H_{\tor})$ for every matrix $H\in \Ha_U$ and \textit{sum-free} if its block sizes  $a_{1},\punkte, a_{s}$ and $a'_{1}, \punkte , a'_{t}$ do not share any partial sums, that is, $\sum_{i\in I} a_i \neq \sum_{i'\in I'}a'_{i'}$ for all $I\subsetneq \{1,\punkte,s\}$ and $I'\subsetneq \{1,\punkte,t\}$.\\[1ex]
For $k\in\{ 1,\punkte, s\}$ we denote the \textit{horizontal change} of $k$ by $\hc(k)$, that is, the minimal integer, such that there is an integer $\hs(k)>0$ (the \textit{horizontal split}) with 
 \[\sum_{j=1}^k a_{j}  = \sum_{j=1}^{\hc(k)} a'_{j}-\hs(k).\]
We denote the \textit{complement of $\hs(k)$} by $\ch(k)\coloneqq a'_{\hc(k)}-\hs(k)$; for formal reasons, we define $\hc(0)\coloneqq 0$.\\[1ex] 
For $k\in\{1,\punkte, t\}$ denote  the \textit{vertical change} by $\vc(k)$, that is, the minimal integer, such that there is an integer $\vs(k)>0$ (the \textit{vertical split}) with 
 \[\sum_{j=1}^k a'_{j}  = \sum_{j=1}^{\vc(k)} a_{j}-\vs(k).\]
We denote the  \textit{complement of $\vs$} by $\cv(k)\coloneqq a'_{\vc(k)}-\vs(k)$; for formal reasons we define $\vc(0)\coloneqq 0$ as above.\\[1ex]
 For every $i\in\{1,\punkte, r\}$, we define
the \textit{horizontal block} $\hb(i)$, that is, the maximal integer with $i=\sum\limits_{j=1}^{\hb(i)-1} a_{j} +\hd(i)$ for a positive integer $\hd(i)$ (the \textit{horizontal datum}). Analogously, we define
the \textit{vertical block} $\vb(i)$, that is, the maximal integer with $i=\sum\limits_{j=1}^{\vb(i)-1} a'_{j} +\vd(i)$ for a positive integer $\vd(i)$ (the \textit{vertical datum}).\\[1ex]
Let us call an entry $(i,j)\in \{1,\punkte,r\}^2$ \textit{acceptable} for $(\underline{a},\underline{a}')$ if $\vd(j)< \hd(i)+n -a_{\hb(i)}$ and \textit{unacceptable} otherwise.\\[1ex]
 A permutation $\sigma\in S_r$ is called \textit{acceptable} for $(\underline{a},\underline{a}')$ if every entry $(i,\sigma(i))$ is acceptable for $f$.\\[1ex]
There exists a minimal, finite set $\{f_1,\punkte, f_s\}$ of toric invariants that generates all toric invariants, such that for each $i\in\{1,\punkte,s\}$, there are integers $h_1,\punkte,h_{n-1}$ with \[f(H)=x_1^{h_1}\cdot \punkte \cdot x_{n-1}^{h_{n-1}}. \]
The set $S$ of these tuples $(h_1,\punkte, h_{n-1})$ yields a cone $\sigma=\Cone(S)$, such that the variety $X\coloneqq \Spec KS_{\sigma}$ is the aforementioned toric variety.
The proof of the following lemma and the notion of an acceptable permutation yield that we can calculate a toric invariant if we have one acceptable permutation for its block sizes.

\begin{lemma}\label{secred}
The toric invariants are generated by the sum-free toric invariants.
\end{lemma}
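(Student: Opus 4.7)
The plan is to proceed by induction on the size $r$ of the toric invariant $f=f^{\mathcal{P}}$ with block sizes $(a_1,\punkte,a_s)$ and $(a'_1,\punkte,a'_t)$. If $f$ is already sum-free there is nothing to prove. Otherwise, by definition there exist proper subsets $I \subsetneq \{1,\punkte,s\}$ and $I' \subsetneq \{1,\punkte,t\}$ with $\sum_{i \in I} a_i = \sum_{i' \in I'} a'_{i'} =: M$, and I would select such a pair with $M$ minimal.

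Next, I would reorder the row-blocks so that those indexed by $I$ occupy the first $|I|$ positions, and the column-blocks so that those indexed by $I'$ occupy the first $|I'|$ positions, permuting the polynomial data $\mathcal{P}_{i,j}$ along. Up to a predictable sign coming from the induced permutations on the underlying $r\times r$ matrix, the toric invariant is unchanged. So I may assume $I=\{1,\punkte,k\}$ and $I'=\{1,\punkte,l\}$ with $\sum_{i=1}^k a_i = \sum_{j=1}^l a'_j = M$. With this ordering, $N^{\mathcal{P}}$ acquires a natural $2\times 2$ block decomposition: an upper-left $M\times M$ submatrix with block sizes $(a_1,\punkte,a_k)\times(a'_1,\punkte,a'_l)$, a lower-right $(r-M)\times(r-M)$ submatrix with block sizes $(a_{k+1},\punkte,a_s)\times(a'_{l+1},\punkte,a'_t)$, and two off-diagonal corner blocks.

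I would then analyze the Leibniz expansion of $\det N^{\mathcal{P}}$ evaluated on $H_{\tor}$. Following the discussion preceding the lemma, each nonzero entry at position $(i,j)$ becomes (after specializing to $H_{\tor}$) a scalar multiple of the monomial $x_{\vd(j)}x_{\vd(j)+1}\cdots x_{n-a_{\hb(i)}+\hd(i)-1}$ in the subdiagonal variables, and the contributing permutations are exactly the acceptable ones. The key claim I need is that every acceptable permutation contributing to the expansion sends $\{1,\punkte,M\}$ to $\{1,\punkte,M\}$. Granting this, the determinant reduces to the product of the two diagonal-block determinants, each of which is itself a toric invariant with strictly smaller block-size data; by the induction hypothesis, each factor is a product of sum-free toric invariants, finishing the argument.

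The main obstacle is this combinatorial rigidity step: showing that no acceptable $\sigma$ can mix the two halves, i.e.\ send some $i\leq M$ to $\sigma(i)>M$. I expect to control this by a degree-counting argument on each subdiagonal variable $x_k$, using that the multiset $\{\vd(\sigma(i))\}_{i=1}^r$ is determined by the column partition while the multiset $\{\hd(i)+n-a_{\hb(i)}\}_{i=1}^r$ is determined by the row partition; the acceptability inequality $\vd(\sigma(i))\leq \hd(i)+n-a_{\hb(i)}$ together with the sum equality $\sum_{i=1}^k a_i=\sum_{j=1}^l a'_j$ and the minimality of $M$ should force any such mismatched $\sigma$ to violate acceptability at some other index. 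Making this pigeonhole precise and checking that the minimality hypothesis really excludes ``shortcut'' configurations is the delicate part of the proof.
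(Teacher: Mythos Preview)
Your reduction strategy---reorder the blocks so that matching partial sums sit in the upper-left corner, then factor the determinant---is the right overall shape, but the key technical claim you isolate is simply false. Take $n=3$ with block sizes $\underline{a}=(1,1)$ and $\underline{a}'=(1,1)$, which is not sum-free since $a_1=a'_1=1$; here $M=1$ is already minimal. Every entry of the $2\times 2$ block matrix is acceptable, so the transposition $(1\,2)$ is an acceptable permutation, and for the datum $P_{1,1}=2x^2$, $P_{1,2}=P_{2,1}=P_{2,2}=x^2$ the resulting invariant is toric (it equals $(x_1x_2)^2$ on $\Ha_U$) while \emph{both} permutations contribute on $H_{\tor}$: the identity gives $2(x_1x_2)^2$ and $(1\,2)$ gives $-(x_1x_2)^2$. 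So no pigeonhole or minimality argument will force contributing permutations to be block-diagonal; there is nothing to make precise here, because the claim itself fails.

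What actually makes the factorisation work is the observation the paper proves first as its Claim~1: any two permutations with nonvanishing product on $H_{\tor}$ yield the \emph{same} monomial in $x_1,\ldots,x_{n-1}$ up to a nonzero scalar. Once you have this, you do not need all acceptable permutations to respect the block splitting---you only need \emph{one} that does, and then the full toric invariant equals (up to scalar) the product along that one, which visibly factors as the product of the two smaller toric invariants. The bulk of the paper's proof is then an explicit inductive construction of an acceptable permutation for any sum-free pair $(\underline{a},\underline{a}')$, which in particular furnishes the required block-diagonal permutation when the block sizes are not sum-free. Your Leibniz-expansion viewpoint is compatible with this, but you must replace the block-diagonality claim by the ``all nonzero terms give the same monomial'' claim, and then exhibit one block-diagonal acceptable permutation rather than argue that they all are.
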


\begin{proof}
First, we reduce the problem as follows:\\
\textbf{Claim 1:} Let $\sigma\in S_r$ be a permutation, such that $\prod\limits_{i=1}^r (H^{\Pa})_{i,\sigma(i)}\neq 0$ for every $H\in \Ha_U$. Then there is an element $\lambda\in K^*$, such that for every $H\in \Ha_U$
 \[f(H)=\lambda\cdot \prod_{i=1}^r (H^{\Pa})_{i,\sigma(i)}.\]
\begin{proof}[Proof of Claim 1]
 Since
every permutation equals a product of transpositions, thus, it suffices to show that for every choice $1\leq i,i',j,j'\leq r$ with $H^{\Pa}_{i,j}\cdot H^{\Pa}_{i',j'}\neq 0 \neq  H^{\Pa}_{i,j'}\cdot H^{\Pa}_{i',j},$ there is an element $\lambda\in K^*$, such that
\[H^{\Pa}_{i,j}\cdot H^{\Pa}_{i',j'}= \lambda \cdot H^{\Pa}_{i,j'}\cdot H^{\Pa}_{i',j}.\]

Since $H^{\Pa}=((P_{i,j}(H))_{(a_i,a'_j)})_{\begin{subarray}{l}
1\leq i\leq s \\ 
1\leq j\leq t \end{subarray}}$, there are 
integers $s',s''\in \{1,\punkte, s\}$ and $t',t''\in\{1,\punkte, t\}$ and integers
 $x^{'}\in\{1,\punkte, a_{s^{'}}\}$ and $x^{''}\in\{1,\punkte, a_{s^{''}}\}$, as well as integers $ y^{'}\in\{1,\punkte,  a'_{t^{'}}\}$ and $ y^{''}\in\{1,\punkte,  a'_{t^{''}}\}$,
such that 
\[H^{\Pa}_{i,j}=(P_{s',t'}(H))_{(a_{s'},a'_{t'})})_{x',y'},~ H^{\Pa}_{i',j'}=(P_{s'',t''}(H))_{(a_{s''},a'_{t''})})_{x'',y''},\]
\[H^{\Pa}_{i,j'}=(P_{s',t''}(H))_{(a_{s'},a'_{t''})})_{x',y''} ~\textrm{and} ~H^{\Pa}_{i',j}=(P_{s'',t'}(H))_{(a_{s''},a'_{t'})})_{x'',y'}.\]

Following the above considerations, there are elements $\mu_1,\mu_2,\mu_3,\mu_4\in K^*$, such that 
\[ H^{\Pa}_{i,j}\cdot H^{\Pa}_{i',j'}      = \frac{\mu_1\cdot\mu_2}{\mu_3\cdot \mu_4}\cdot  H^{\Pa}_{i,j'} \cdot H^{\Pa}_{i',j}\]
which yields the claim.
\end{proof}
In order to calculate a set of minimal generators, we can without loss of generality assume $a_i,a'_j\leq n-1$ for all $i\in\{1,\punkte, s\}$ and $j\in\{1,\punkte, t\}$, since otherwise the corresponding semi-invariant $f$ fulfills $f(H)=0$ for every $H\in \Ha_U$ or deletion of these blocks leads to changing $f$ by a scalar.\\[1ex]
Let $f$ be a toric $U$-invariant. To see of which form $f$ is on $\Ha_U$, we can without loss of generality order $\underline{a}:=(a_{1},\punkte, a_{s})$ and $\underline{a}:=(a'_{1}, \punkte , a'_{t})$ as we like and adapt the permutation accordingly.\\[1ex]
 It, therefore, suffices to consider an arbitrary $r\times r$-matrix of sum-free block sizes $\underline{a}$ and $\underline{a}'$.
If we find an acceptable permutation $\sigma$ for $(\underline{a},\underline{a}')$, following the above claim there exists an element  $\mu\in K^*$ and a datum $\Pa$ which fulfills $f(H)=\mu\cdot f^{\Pa}(H)=\mu\cdot \prod\limits_{i=1}^r (H^{\Pa})_{i,\sigma(i)}\neq 0$  for every $H\in \Ha_U$. Then $f$ and $f^{\Pa}$ coincide generically.\\[1ex]
We define a permutation $\sigma\in S_r$, such that every $(i,\sigma(i))$ is acceptable for $(\underline{a},\underline{a}')$ by double induction on $s$ and $t$.\\[1ex]
Let $s=1$ and $t=1$, then every entry $(i,i)$ is acceptable  for $(\underline{a},\underline{a}')$, since $a_{\hb(i)}=a_1\leq n-1$ and, therefore,
\[ \vd(i)= i <  i+n-a_1  = \hd(i) +n -a_1 .\]
Let $t=1$ and assume that for every $k\leq s$, the above claim holds true. Consider the block sizes $\underline{a}:=(a_1,\punkte, a_{s+1})$ and $\underline{a}':=a'_1$, then every entry $(i,i)$ is acceptable  for $(\underline{a},\underline{a}')$, since
\[ \vd(i) = i < i+n-a_{\hb(i)} \leq \hd(i) +n -a_{\hb(i)}.\]
Let $s=1$ and assume for every $k\leq t$, the above claim holds true. Consider the block sizes $\underline{a}:=(a_1)$ and $\underline{a}':=(a'_1,\punkte, a'_{t+1})$, then every $(i,i)$ is acceptable  for $(\underline{a},\underline{a}')$ in the same way:
\[ \vd(i) = i <  i+n-a_1 = \hd(i) +n -a_1 .\]
We can set $\sigma=id$ in every of these cases.\\[1ex]
Let us fix an arbitrary integer $t$ and let us assume that for $s'\leq s$ and for every choice of block sizes $a_1,\punkte,a_{s'}$ and $a'_1,\punkte,a'_{t}$ with $\sum\limits_{j=1}^{s'}a_j=\sum\limits_{j=1}^{t} a'_j$, there is a permutation $\sigma$ as claimed.\\
We consider block sizes $\underline{a}:=(a_1,\punkte, a_{s+1})$ and $\underline{a}':=(a'_1,\punkte,a'_{t})$ with $\sum\limits_{j=1}^{s+1}a_j=\sum\limits_{j=1}^t a'_j=r$ and show in the following that we can find a permutation as wished for.

\textbf{First case:} We can order the block sizes $a'_1,\punkte, a'_t$, such that $a'_t\geq a_{s+1}$.\\[1ex]
We can apply the premise of the induction to the $r-a_{s+1}\times r-a_{s+1}$-upper-left submatrix of block sizes $\underline{a(s)}:= (a_1,\punkte, a_s)$ and $\underline{a(s)}':=(a'_1,\punkte, a'_{t-1},a'_t-a_{s+1})$ and obtain a permutation $\sigma'\in S_{r-a_{s+1}}$, such that $(i,\sigma'(i))$ is acceptable for $(\underline{a(s)},\underline{a(s)}')$ for every $i\leq r-a_{s+1}$.\\[1ex] 
We define $\sigma\in S_r$ by
\[\sigma(i)\coloneqq\left\lbrace 
\begin{array}{ll}
\sigma'(i), &\textrm{if}~i\leq r-a_{s+1};  \\ 
i, & \textrm{otherwise}. 
                         \end{array}\right. \]
Then every entry $(i,\sigma(i))$, where $i\leq r-a_{s+1}$, is acceptable  for $(\underline{a},\underline{a}')$, since it is acceptable for $(\underline{a(s)},\underline{a(s)}')$.\\[1ex]
Every entry $(i,i)$, where $i>r-a_{s+1}$, is acceptable  for $(\underline{a},\underline{a}')$, since
\[\vd(i)~=~i-\sum_{j=1}^{t-1}a'_j~<~i-\sum_{j=1}^t a'_j+ n ~=~i-\sum_{j=1}^{s}a_j+n-a_{s+1}~=~\hd(i) +n -a_{s+1}.\]
\textbf{Second case:} The inequality $a'_i< a_{j}$ holds true for every $i\in\{1,\punkte, s+1\}$ and $j\in\{1,\punkte, t\}$.\\[1ex]
\underline{\underline{Claim:}} For every $k\in\{1,\punkte, s\}$, there is a permutation $\sigma\in S_{a_1+\punkte+a_{k+1}}$, such that every entry\\
 $~~~~~~~~~~~~~(i,\sigma(i))$ is acceptable for $(\underline{a},\underline{a}')$. Furthermore, the entry $(i,i)$ is acceptable for\\
 $~~~~~~~~~~~~~(\underline{a},\underline{a}')$ for every integer $a_1+\punkte +a_{k}+\hs(k)<i\leq a_1+\punkte +a_{k+1}$.\\[1ex]
We prove the claim by induction on $k$.\\[1ex]
\underline{Let $k=1$.}\\
Define
\[\sigma(i)\coloneqq\left\lbrace 
\begin{array}{ll}
i, &\textrm{if}~i\leq a_1-\ch(1);  \\ 
i+\hs(1), &\textrm{if}~a_1-\ch(1) <	i \leq a_1;   \\ 
i-\ch(1), &\textrm{if}~a_1 < i \leq a_1+\hs(1);  \\ 
i, & \textrm{otherwise}.
                         \end{array}\right.\]
The permutation $\sigma$ can be vizualized as follows:
\begin{center}
\begin{tikzpicture}[scale=0.8]
   \tikzstyle{every node} =[shape=rectangle];
      \draw[line width=0.02cm] (0cm,2cm) -- (5.7cm,2cm);
      \node[anchor = north] at (-0.5cm,0.6cm)  {\tiny{$1$}};
     \node[anchor = north] at (-0.5cm,-2.1cm)  {\tiny{$2$}};
      \draw[line width=0.02cm] (0cm,-1.4cm) -- (5.7cm,-1.4cm);
 \draw[line width=0.02cm] (0cm,-3.2cm) -- (5.7cm,-3.2cm);
      \draw[line width=0.02cm] (0cm, 2cm) -- (0cm, -3.4cm);
\fill[color = lllllllightblue] (2.71cm,1.99cm) -- (4.39cm,1.99cm) -- (4.39cm,-1.39cm) -- (2.71cm,-1.39cm) -- (2.71cm,2cm);
\fill[color = lllllllightblue] (2.69cm,1.99cm) -- (1.11cm,1.99cm) -- (1.11cm,-1.39cm) -- (2.69cm,-1.39cm) -- (2.69cm,2cm);
\fill[color = lllllllightblue] (0.01cm,1.99cm) -- (1.09cm,1.99cm) -- (1.09cm,-1.39cm) -- (0.01cm,-1.39cm) -- (0.01cm,2cm);
\fill[color = lllllllightblue] (5.19cm,1.99cm) -- (4.41cm,1.99cm) -- (4.41cm,-1.39cm) -- (5.19cm,-1.39cm) -- (5.19cm,2cm);
\fill[color = lllllllightblue] (5.19cm,-1.41cm) -- (4.41cm,-1.41cm) -- (4.41cm,-3.19cm) -- (5.19cm,-3.19cm) -- (5.19cm,-1.41cm);
\fill[color = lllllllightblue] (2.71cm,-1.41cm) -- (4.39cm,-1.41cm) -- (4.39cm,-3.19cm) -- (2.71cm,-3.19cm) -- (2.71cm,-1.41cm);
\fill[color = lllllllightblue] (2.69cm,-1.41cm) -- (1.11cm,-1.41cm) -- (1.11cm,-3.19cm) -- (2.69cm,-3.19cm) -- (2.69cm,-1.41cm);
\fill[color = lllllllightblue] (0.01cm,-1.41cm) -- (1.09cm,-1.41cm) -- (1.09cm,-3.19cm) -- (0.01cm,-3.19cm) -- (0.01cm,-1.41cm);

      \draw[line width=0.02cm] (1.1cm, -3.4cm) -- (1.1cm, 2cm);
      \draw[line width=0.02cm] (2.7cm, -3.4cm) -- (2.7cm, 2cm);
      \draw[line width=0.02cm] (4.4cm, -3.4cm) -- (4.4cm, 2cm);
   \draw[line width=0.02cm] (5.4cm, -3.4cm) -- (5.4cm, 2cm);
     \node[anchor = north] at (0.5cm,2.5cm)  {\tiny{$1$}};
\node[anchor = north] at (1.95cm,2.5cm)  {\tiny{$\cdots$}};
    \node[anchor = north] at (3.5cm,2.5cm)  {\tiny{$\hc(1)$}};
   \node[anchor = north] at (3.05cm,-3.4cm)  {\small{$\underbrace{......}_{\ch(1)}$}};
 \node[anchor = north] at (3.9cm,-3.4cm)  {\small{$\underbrace{..........}_{\hs(1)}$}};
      \node[anchor = north] at (4.9cm,2.5cm)  {\tiny{$\hc(1)+1$}};
     \node[anchor = north,lblue] at (2cm,0.8cm)  {\tiny{$(i,\sigma(i))$}};
\node[anchor = north] at (5.9cm,-3.2cm)  {\tiny{$(i,i)$}};
      \draw[dotted,line width=0.035cm] (0cm, 2cm) -- (5.8cm, -3.8cm);
\draw[dotted] (3.4cm, -1.4cm) -- (3.4cm, -2.9cm);
 \draw[lblue,line width=0.04cm,line cap=round] (0.005cm, 1.995cm) -- (2.68cm, -0.68cm);
  \draw[lblue,line width=0.04cm,line cap=round] (3.71cm, -0.68cm) -- (4.38cm, -1.38cm);
  \draw[lblue,line width=0.04cm,line cap=round] (2.71cm, -1.4cm) -- (3.75cm, -2.45cm); 
\draw[lblue,line width=0.04cm,line cap=round] (5.2cm, -3.2cm) -- (4.4cm, -2.4cm); 
\end{tikzpicture}
\end{center}
For $i\leq a_1-\ch(1) $, the entry $(i,i)$ is acceptable  for $(\underline{a},\underline{a}')$ due to the considerations in the case $s=1$.\\[1ex]
For $a_1-\ch(1) <	i \leq a_1$, the entry $(i,\sigma(i))$ is acceptable  for $(\underline{a},\underline{a}')$, since
\[ \vd(\sigma(i)) = i +\hs(1) -\sum\limits_{j=1}^{\hc(1)-1}a'_j  <           i+n-a_{1}  =   \hd(i) +n -a_{1}.\]
For $a_1 < i \leq a_1+\hs(1)$, the entry $(i,\sigma(i))$ is acceptable  for $(\underline{a},\underline{a}')$, since
\[  \vd(\sigma(i)) =  i+\hs(1)-\sum\limits_{j=1}^{\hc(1)}a'_j =   i -a_1  <     i-a_1+n-a_{2} = \hd(i) +n -a_{2}.\]
For $ i >  a_1+\hs(1)$, the entry $(i,\sigma(i))$ is acceptable  for $(\underline{a},\underline{a}')$, since
\[ \vd(\sigma(i)) =  i-\sum\limits_{j=1}^{\vb(\sigma(i))-1}a'_j  < i-a_1+n-a_{2}  =  \hd(i) +n -a_{2} .\]
\underline{Now let $k+1>1$.}\\[1ex]
Assume the claim holds true for $k$, that is, there is a permutation $\sigma'\in S_{a_1+\punkte+a_{k+1}}$, such that every entry $(i,\sigma'(i))$ is acceptable  for $(\underline{a},\underline{a}')$ and such that $\sigma'(i)=i$ for every integer $a_1+\punkte +a_{k}+\hs(k)<i\leq a_1+\punkte +a_{k+1}$.\\[1ex]
 Then we set
\[\sigma(i)\coloneqq\left\lbrace 
\begin{array}{ll}
\sigma'(i), &\textrm{if}~i\leq \sum_{j=1}^{k+1} a_j-\ch(k+1) ;  \\ 
i+\hs(k+1), &\textrm{if}~\sum_{j=1}^{k+1} a_j-\ch(k+1) <	i \leq\sum\limits_{j=1}^{k+1} a_j;   \\ 
i-\ch(k+1), &\textrm{if}~\sum\limits_{j=1}^{k+1} a_j < i \leq \sum\limits_{j=1}^{k+1} a_j+\hs(k+1);  \\ 
i, & \textrm{otherwise}. 
                         \end{array}\right.\]
 As in the case $k=1$, the permutation $\sigma$ can be vizualized by\\
\begin{center}
\begin{tikzpicture}[scale=0.8]
   \tikzstyle{every node} =[shape=rectangle];
      \draw[line width=0.02cm] (-1cm,2cm) -- (5.7cm,2cm);
      \draw[line width=0.02cm] (-1cm,-1.4cm) -- (5.7cm,-1.4cm);
      \draw[line width=0.02cm] (-1cm,-2.9cm) -- (5.7cm,-2.9cm);
\fill[color = lllllllightblue] (2.69cm,1.195cm) -- (0.81cm,1.195cm) -- (0.81cm,-1.39cm) -- (2.69cm,-1.39cm) -- (2.69cm,1.195cm);
\fill[color = lllllllightblue] (2.71cm,1.195cm) -- (4.39cm,1.195cm) -- (4.39cm,-1.39cm) -- (2.71cm,-1.39cm) -- (2.71cm,1.195cm);
\fill[color = lllllllightblue] (4.9cm,1.195cm) -- (4.41cm,1.195cm) -- (4.41cm,-1.39cm) -- (4.9cm,-1.39cm) -- (4.9cm,1.195cm);
\fill[color = lllllllightblue] (2.69cm,-1.41cm) -- (0.81cm,-1.41cm) -- (0.81cm,-2.89cm) -- (2.69cm,-2.89cm) -- (2.69cm,-1.41cm);
\fill[color = lllllllightblue] (2.71cm,-1.41cm) -- (4.39cm,-1.41cm) -- (4.39cm,-2.89cm) -- (2.71cm,-2.89cm) -- (2.71cm,-1.41cm);
\fill[color = lllllllightblue] (4.9cm,-1.41cm) -- (4.41cm,-1.41cm) -- (4.41cm,-2.89cm) -- (4.9cm,-2.89cm) -- (4.9cm,-1.41cm);
      \draw[line width=0.02cm] (0.8cm, -3.2cm) -- (0.8cm, 3cm);
      \draw[line width=0.02cm] (2.7cm, -3.2cm) -- (2.7cm, 3cm);
      \draw[line width=0.02cm] (4.4cm, -3.2cm) -- (4.4cm, 3cm); 
\node[anchor = north] at (-1.5cm,2.7cm)  {\tiny{$k$}};
      \node[anchor = north] at (-1.5cm,0.7cm)  {\tiny{$k+1$}};
     \node[anchor = north] at (-1.5cm,-1.7cm)  {\tiny{$k+2$}};
     \node[anchor = north] at (0cm,3.8cm)  {\tiny{$\hc(k)$}};
\node[anchor = north] at (1.75cm,3.8cm)  {\tiny{$\cdots$}};
    \node[anchor = north] at (3.5cm,3.8cm)  {\tiny{$\hc(k+1)$}};
   \node[anchor = north] at (3.06cm,-3.1cm)  {\small{$\underbrace{.......}_{\ch(k+1)}$}};
 \node[anchor = north] at (3.96cm,-3.1cm)  {\small{$\underbrace{...........}_{\hs(k+1)}$}};
      \node[anchor = north] at (5.2cm,3.8cm)  {\tiny{$\hc(k+1)+1$}};
     \node[anchor = north,darkblue] at (-0.5cm,1.8cm)  {\tiny{$(i,\sigma'(i))$}};
     \node[anchor = north,lblue] at (2cm,0.85cm)  {\tiny{$(i,\sigma(i))$}};
\node[anchor = north] at (5.8cm,-3.15cm)  {\tiny{$(i,i)$}};
      \draw[dotted,line width=0.035cm] (0.805cm, 1.195cm) -- (5.7cm, -3.7cm);
\draw[dotted] (3.4cm, -1.4cm) -- (3.4cm, -2.9cm);
 \draw[darkblue,line width=0.04cm,decorate,decoration=snake,segment length=6pt,line cap=round] (0.805cm, 1.195cm) -- (-1cm, 3cm);
 \draw[lblue,line width=0.04cm,line cap=round] (0.805cm, 1.195cm) -- (2.68cm, -0.68cm);
  \draw[lblue,line width=0.04cm,line cap=round] (3.71cm, -0.68cm) -- (4.38cm, -1.38cm);
  \draw[lblue,line width=0.04cm,line cap=round] (2.71cm, -1.4cm) -- (3.75cm, -2.45cm); 
  \draw[lblue,line width=0.04cm,line cap=round] (4.9cm, -2.9cm) -- (4.4cm, -2.4cm); 
\end{tikzpicture}
\end{center}
The fact that each entry is acceptable can be proved as in the cases before.\\[1ex]
If $s$ is fixed and the assumption holds true for every $k\leq t$, then it also holds true for $t+1$ by an argumentation symmetric to the above one.\\[1ex]
 Therefore, we have found a permutation as wished for in every case.\\[1ex]
We can define the polynomials
 \[P_{k,l}\coloneqq \left\lbrace 
\begin{array}{ll}
 x^{n-a_k+\hd(i_{\min})-\vd(i_{\min})}, &\left. \begin{array}{l}
\textrm{if~there~is~a~ minimal~ element}~ i_{\min}~ \textrm{with} \\ 
\hb(i_{\min})=k~\textrm{and} ~\vb(\sigma(i_{\min}))=l; 
                                               \end{array}\right. 
\\  
0,  & ~~\textrm{otherwise}.
                              \end{array}\right.\]
Then, corresponding to the datum  $\Pa=((a_i)_{1\leq i\leq s},(a'_j)_{1\leq j\leq t},(P_{i,j})_{\begin{subarray}{l}
1\leq i\leq s \\ 
1\leq j\leq t \end{subarray}})$, there is an element $\mu\in K$, such that 
\[f(H)=\mu\cdot\prod\limits_{i=1}^r (H^{\Pa})_{i,\sigma(i)}\]
for every $H\in\Ha_U$.
\end{proof}
Given a toric invariant $f$, it thus suffices to find one acceptable permutation in order to calculate $f$ on $\N_U$.

\subsubsection{General description of toric invariants}\label{gendesctor}
We fix a sum-free toric invariant $f$ of block sizes $\underline{a}:=(a_1,\punkte,a_s)$ and $\underline{a}':=(a'_1,\punkte,a'_t)$ and assume, without loss of generality, $a_1\leq \punkte\leq a_s$ and $a'_1\leq \punkte\leq a'_t$.\\[1ex]
Given an integer $i\in\{1,\punkte,s\}$, we define $s_i:=\sum\limits_{l=1}^i a_l +1$.
\begin{lemma}\label{accperm}
The permutation $\sigma\in S_r$ defined by
\[\sigma(i)\coloneqq\left\lbrace 
\begin{array}{ll}
i+\hs(i_{k}), &\textrm{if}~\sum\limits_{j=1}^{j_{k}-1} a'_j <	i \leq\sum\limits_{j=1}^{i_{k}} a_j;   \\ 
i-\ch(i_{k}), &\textrm{if}~\sum\limits_{j=1}^{i_{k}} a_j < i \leq \sum\limits_{j=1}^{j_{k}} a'_j;  \\ 
i, & \textrm{otherwise}. 
                         \end{array}\right.\]
for $k\in\{1,\punkte, x\}$  is acceptable for $(\underline{a},\underline{a}')$.
\end{lemma}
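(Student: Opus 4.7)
The plan is to verify the acceptability inequality $\vd(\sigma(i)) < \hd(i)+n-a_{\hb(i)}$ directly in each of the three cases defining $\sigma$, mirroring (and packaging into a single closed form) the inductive crossing-by-crossing construction already executed in the proof of Lemma \ref{secred}. First, I would unpack the indexing: since the invariant is sum-free and the block sizes are sorted increasingly, the partial sums $A_i:=\sum_{l=1}^{i}a_l$ and $A'_j:=\sum_{l=1}^{j}a'_l$ interleave strictly (meeting only at $A_s=A'_t=r$), and the pairs $(i_k,j_k)_{k=1}^{x}$ encode the $x$ "crossings" where an $A$-boundary is immediately followed, from below, by an $A'$-boundary. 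With this interpretation, the three intervals
\[
\bigl(A'_{j_k-1},\,A_{i_k}\bigr],\qquad \bigl(A_{i_k},\,A'_{j_k}\bigr],\qquad\text{and the complement}
\]
are pairwise disjoint, partition $\{1,\dots,r\}$, and the shifts $+\hs(i_k)$ and $-\ch(i_k)$ are exactly the bijections between the first two intervals; summing over $k$ shows $\sigma$ is a well-defined permutation.

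Next I would check acceptability for each case. In the first case, $i$ lies in the horizontal block $\hb(i)=i_k$ with $\hd(i)=i-A_{i_k-1}$, while $\sigma(i)=i+\hs(i_k)$ lies in vertical block $j_k$ with $\vd(\sigma(i))=i+\hs(i_k)-A'_{j_k-1}$. Substituting $\hs(i_k)=A'_{j_k}-A_{i_k}-\ch(i_k)$ (or its analogue from the definition) reduces the target inequality to $\vd(\sigma(i))<\hd(i)+n-a_{i_k}$, which follows because $a_{i_k}\le n-1$ and $i\le A_{i_k}$. The second case is symmetric: $\sigma(i)=i-\ch(i_k)$ now sits in vertical block $j_k$ with a smaller vertical datum, and the same arithmetic identity between $\hs$, $\ch$ and the crossing partial sums yields the bound. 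In the third (diagonal) case, $\sigma(i)=i$, and the estimate
\[
\vd(i)=i-A'_{\vb(i)-1}\le i-A_{\hb(i)-1}-1<\hd(i)+n-a_{\hb(i)}
\]
holds because on this region the relevant $A$- and $A'$-partial sums agree (we are between two consecutive crossings) and $a_{\hb(i)}\le n-1$.

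The three case checks are essentially re-runs of the estimates carried out in the proof of Lemma \ref{secred} (the cases $s=1$, $t=1$, and the inductive step), now applied uniformly to the $k$-th crossing. I would therefore organise the write-up around a single lemma computing $\hd$, $\vd$ and $a_{\hb(\cdot)}$ at the boundaries $A_{i_k},A'_{j_k}$, and then apply it to each case in turn.

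The main obstacle is purely bookkeeping: keeping track of which $i_k$ governs a given $i$, and verifying the identity linking $\hs(i_k)$, $\ch(i_k)$ and the partial sums so that the $\vd(\sigma(i))$ on the two off-diagonal intervals telescopes correctly. The sum-free hypothesis is essential here — without it, partial sums could coincide, the intervals would overlap, and $\sigma$ would fail to be a permutation — but once this is granted, the rest is a routine, if notation-heavy, computation.
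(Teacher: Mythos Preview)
Your proposal is correct and follows essentially the same approach as the paper: the paper's proof consists of the single sentence that the result is ``a straight forward calculation making use of the fact that'' the diagonal entry $(i,i)$ is acceptable whenever $i$ lies outside the crossing windows, which is exactly your third case, with the other two cases implicitly referred back to the estimates already carried out in the proof of Lemma~\ref{secred}. Your write-up simply makes explicit what the paper leaves as an exercise; one small caution is that your phrase ``the relevant $A$- and $A'$-partial sums agree'' in the diagonal case is loose (sum-freeness forbids equality), and what you actually need there is the inequality $A'_{\vb(i)-1}\ge A_{\hb(i)-1}$, which is what being ``between crossings'' gives.
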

\begin{proof}
The proof is given by a straight forward calculation making use of the fact that for \[i\notin\{s_{i_k}+h\mid h\in\{0,\punkte,\hs(i_k)\} ~\textrm{and} ~k\in\{1,\punkte,x\}\},\]
the entry $(i,i)$ is acceptable for  $(\underline{a},\underline{a}')$.
\end{proof} 
We fix the acceptable permutation $\sigma\in S_r$ and the induced datum $P$ (as in the proof of Lemma \ref{secred}).
\begin{lemma}
 Let  $f$ be a sum-free toric invariant of block sizes $\underline{a}\coloneqq(a_1,\punkte,a_s)$ and $\underline{a}'\coloneqq(a'_1,\punkte,a'_t)$ and let $f(H)= x_1^{h_1} \punkte x_{n-1}^{h_{n-1}} $. Then $h_{n-1}=s$ and for $l\in \{1,\punkte,n-2\}$:
 \[h_l= t+\sum_{k=2}^l \sharp\{j\in\{1,\punkte,t\}\mid a'_j\geq k\} -\sum_{k=1}^{l-1} \sharp\{i\in\{1,\punkte,s\}\mid a_i\geq n-k\}\]
\end{lemma}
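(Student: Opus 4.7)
The plan is to combine Lemma~\ref{accperm} with the construction appearing in the proof of Lemma~\ref{secred}. I first apply Lemma~\ref{accperm} to the ordered block sizes $(\underline{a},\underline{a}')$ to obtain an acceptable permutation $\sigma \in S_r$, and then invoke that construction to produce a datum $P$ and a constant $\mu \in K^*$ such that
\[
f(H) = \mu \prod_{i=1}^{r} (H^{P})_{i,\sigma(i)}
\]
for every $H \in \Ha_U$. Since $f$ is toric, I may evaluate on $H_{\tor}$, where $(H_{\tor}^{m})_{p,q} = x_q x_{q+1}\cdots x_{p-1}$ whenever $p-q = m$ and vanishes otherwise. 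Translating block positions to matrix positions via $(p,q) = (n - a_{\hb(i)} + \hd(i),\, \vd(\sigma(i)))$, each nonzero factor contributes an exponent of $1$ to $x_l$ precisely when $\vd(\sigma(i)) \le l \le n - a_{\hb(i)} + \hd(i) - 1$.

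A key preliminary check is that the choice $P_{k,l} = x^{n-a_k+\hd(i_{\min})-\vd(i_{\min})}$ from the proof of Lemma~\ref{secred} keeps every factor on the required subdiagonal $p-q = m_{k,l}$ for the specific $\sigma$ of Lemma~\ref{accperm}, so that the product does not vanish on $H_{\tor}$. Granting this, the exponent of $x_l$ in $f(H_{\tor})$ reads
\[
h_l = \bigl|\{\, i \in \{1,\ldots,r\} : \vd(\sigma(i)) \le l \text{ and } \hd(i) \ge l+1-n+a_{\hb(i)} \,\}\bigr|.
\]
For $l = n-1$ the second condition forces $\hd(i) = a_{\hb(i)}$, which is met by exactly one $i$ in each of the $s$ horizontal blocks, while the first condition is automatic since $a'_j \le n-1$ for every $j$. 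Hence $h_{n-1} = s$.

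For $1 \le l \le n-2$, the acceptability inequality $\vd(\sigma(i)) < \hd(i) + n - a_{\hb(i)}$ shows that $\hd(i) < l+1-n+a_{\hb(i)}$ already forces $\vd(\sigma(i)) \le l$. This lets me rewrite the count as a difference of unconstrained counts,
\[
h_l = \bigl|\{\, i : \vd(\sigma(i)) \le l \,\}\bigr| - \bigl|\{\, i : \hd(i) < l+1-n+a_{\hb(i)} \,\}\bigr|.
\]
Since $\sigma$ is a bijection, the first term equals $\sum_{v=1}^{t} \min(a'_v, l) = t + \sum_{k=2}^{l} \sharp\{\, j : a'_j \ge k \,\}$. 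The second term equals $\sum_{k=1}^{s} \max(0,\, a_k - (n-l)) = \sum_{k=1}^{l-1} \sharp\{\, i : a_i \ge n-k \,\}$ after swapping the order of summation. Combining yields the claimed formula.

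The main obstacle I anticipate is the nonvanishing verification on $H_{\tor}$: it requires a case-by-case check that the three pieces of $\sigma$ (identity, shift by $\hs(i_k)$, shift by $-\ch(i_k)$) each land on a single value of $p-q$ within the blocks they visit, so that the one exponent $m_{k,l}$ assigned to each block is consistent with every factor it contains. Once this routine verification is in place, the remainder of the argument reduces to the two cardinality computations above.
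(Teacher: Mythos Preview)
Your proposal is correct and follows the same route as the paper: fix the acceptable permutation $\sigma$ from Lemma~\ref{accperm}, use the datum $P$ built in the proof of Lemma~\ref{secred}, read off the factor $\prod_{k=\vd(\sigma(i))}^{n-a_{\hb(i)}+\hd(i)-1}x_k$ for each $i$, and count occurrences of $x_l$. The paper's own proof stops after writing this product and appeals to ``combinatorial considerations''; you have supplied exactly those considerations via the inclusion $B^c\subseteq A$ (from acceptability) and the two summation swaps, so your argument is in fact more explicit than the paper's. The ``nonvanishing on $H_{\tor}$'' check you flag as an obstacle is also taken for granted in the paper's proof (it is implicit in the displayed identity for $H^P_{(i,\sigma(i))}$), and it does hold for the reason you indicate: on each block pair $(k,l)$ visited by $\sigma$, the shift $\sigma(i)-i$ is constant, hence so is $\hd(i)-\vd(\sigma(i))$, which is all that is needed for the single exponent $m_{k,l}$ to be consistent.
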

\begin{proof}
Let $H=H_{\tor}\in \Ha_U$ be a matrix with entries $H_{k+1,k}=:x_k$. Then \[H^{P}_{(i,\sigma(i))}=(H^{n-\vd(\sigma(i))-a_{\hb(i)}+\hd(i)})_{(\hd(i),\vd(\sigma(i))}\]
\[H^{P}_{(i,\sigma(i))}=(H^{n-\vd(\sigma(i))-a_{\hb(i)}+\hd(i)})_{\hd(i),\vd(\sigma(i))}=\prod\limits_{k=\vd(\sigma(i))}^{n-a_{\hb(i)}+\hd(i)-1}x_k.\] 
The proof follows from combinatorial considerations, then.
\end{proof}

\subsection{The associated toric variety}
We  denote the subring of $K[\N]^U$ which is generated by all toric invariants by $K[\N]^U_{\tor}$. 
Corresponding to $K[\N]^U_{\tor}$, there is a variety $X\coloneqq \Spec K[\N]^U_{\tor}$ which is a toric variety.
Given a sum-free toric invariant, there are integers $h_1,\punkte,h_{n-1}$, such that \[f(H)=x_1^{h_1}\cdot \punkte\cdot x_{n-1}^{h_{n-1}}.\]
Denote by $S$ the set of tuples $(h_1,\punkte,h_{n-1})\in \textbf{N}^{n-1}$ that arise in this way from a minimal set of generating toric invariants and denote $\sigma\coloneqq \Cone(S)$.\\[1ex]
Let $N$ be the lattice $\textbf{Z}^{n-1}$, then $\sigma$ is generated by the finite set $S\subset \textbf{Z}^{n-1}$ and fulfills $\sigma \cap (-\sigma)=\{0\}$, therefore, $\sigma$ as well as $\sigma^\vee$ are strongly convex rational polyhedral cones of maximal dimension. The variety $X= \Spec K[\N]^U_{\tor}\cong\Spec K[S_{\sigma^{\vee}}]$, thus, is a normal toric variety by Lemma \ref{toricco}.\\[1ex]
Let $T\subset \GL_n$ be the torus of diagonal matrices. There is a natural action $\tau$ of $T$ on the $U$-invariant ring of $\N$ as follows: 
\[\tau\colon ~ T\times K[\N]^U \rightarrow K[\N]^U;~ (t,f)~  \mapsto \left( \begin{array}{ll}
f\colon & \N \rightarrow K \\ 
 & N\mapsto f(tNt^{-1})
                 \end{array}\right).\]
Another operation is given, since the variety $X=\Spec K[\N]^U_{\tor}$ is a toric variety:
\begin{align}
\tau'\colon &~ (K^*)^{n-1}\times K[\N]^U_{\tor} \rightarrow K[\N]^U_{\tor}.\nonumber
\end{align} 
Let $f$ be a toric invariant, such that $f(H)=x_1^{h_1}\punkte x_{n-1}^{h_{n-1}}$, and let $c\coloneqq(c_1,\punkte,c_{n-1})\in(K^*)^{n-1}$. Then
$\tau'(c,f)(H)= f(H)\cdot c_1^{h_1}\punkte c_{n-1}^{h_{n-1}}.$\\[1ex]
The  operation $\tau$ is induced by the operation $\tau'$ via the morphism
\[\rho\colon ~  T \rightarrow (K^*)^{n-1};~ (t_1,\punkte,t_n)\mapsto (t_{2}/t_1,\punkte,t_{n}/t_{n-1}).\]

Let $i\in\{1,\punkte,n-1\}$, then we define the $U$-invariant $\det_{i}(N):=\Det(N^{n-i}_{(i,i)})$ and the $U$-invariant $f_i$ to be the unique toric invariant of block sizes $(i),(1,\punkte,1)$. 
 Furthermore, for integers $i,j\in\{1,\punkte,n\}$, such that $j< i-1$, we define the datum \[P=\left((j-1,n-i+1),(j,n-i),\left(\begin{array}{cc}
x^{n-j+1} & 0 \\ 
x & x^i
                         \end{array} \right)\right)\] 
and denote $f_{i,j}\coloneqq f^{P}$. These invariants separate the $U$-orbits generically in $\N_U\subseteq \N$.\\[1ex]
Let $\pi: \N \rightarrow \N\quot U$ be an algebraic $U$-quotient of $\N$ which exists, since $K[\N]^U$ is finitely generated. The variety $\N\quot U$ is normal, since the nilpotent cone is normal (see \cite[III.3.3]{Kr}).\\[1ex]
The space of $U$-normal forms is given by $\Ha_U\cong \textbf{A}^D\times (K^*)^{n-1}$ and the map $\pi$ restricts to a morphism $i:\Ha_U\rightarrow \N\quot U$. We consider the toric variety $X$ described above by its cone $\sigma$ which is induced by the sum-free toric invariants and let $X'\cong (K^*)^{n-1}$ be the dense orbit in $X$.\\[1ex]
The morphism $i: \Ha_U\rightarrow i(\Ha_U)$ is injective, since   the fibres are separated generically by certain $U$-invariants. Therefore, we can construct an explicit morphism $i': i(\Ha_U)\rightarrow \Ha_U$, such that $i\circ i'=\id_{ i(\Ha_U)}$ and $i'\circ i=\id_{ \Ha_U}$. The morphism $i$ is, thus, birational  and $\textbf{A}^D\times (K^*)^{n-1}\cong i(\Ha_U)\subseteq \N\quot U$. 
\begin{lemma}
 The natural embedding $K[\N]^U_{\tor}\rightarrow K[\N]^U$ induces a dominant, $T$-equivariant morphism $p: \N\quot U\rightarrow X,$  such that $p^{-1}(x)\cong \textbf{A}^D$ for each point $x'\in X'$. 
\end{lemma}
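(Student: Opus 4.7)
The plan is to unpack the four assertions (existence, dominance, $T$-equivariance, affine-space fibres) separately. For existence and dominance, applying $\Spec$ to the $K$-algebra inclusion $K[\N]^U_{\tor}\hookrightarrow K[\N]^U$ yields a morphism $p\colon \N\quot U\to X$; since both rings are finitely generated (using Lemma \ref{Uinvfin} and the explicit generators produced in the preceding subsection) and the embedding is injective, $p$ is automatically dominant.

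For $T$-equivariance, first verify that the action $\tau$ of $T$ on $K[\N]^U$ preserves the toric subring. Conjugation by a diagonal matrix $t=\diag(t_1,\punkte,t_n)$ sends a normal form $H\in\Ha_U$ with subdiagonal entries $x_1,\punkte,x_{n-1}$ to a matrix whose toric part has subdiagonal entries $(t_2/t_1)x_1,\punkte,(t_n/t_{n-1})x_{n-1}$. Hence $\tau(t,f)(H)=\tau(t,f)(H_{\tor})$ whenever $f$ satisfies this identity, so $K[\N]^U_{\tor}$ is $T$-stable. The restricted action factors through $\rho\colon T\to(K^*)^{n-1}$ and coincides with the toric-variety action $\tau'$ already defined on $X$; consequently the embedding is $T$-equivariant, and $p$ inherits equivariance upon applying $\Spec$.

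For the fibre statement I would work on the open subset $i(\Ha_U)\subseteq \N\quot U$ via the birational identification $\Ha_U\cong \textbf{A}^D\times(K^*)^{n-1}$ supplied by the explicit inverse $i'$. By definition every toric invariant $f$ satisfies $f(H)=f(H_{\tor})$, so $p\circ i$ depends only on the $(K^*)^{n-1}$-factor. Thus $p\circ i$ factors through the projection $\textbf{A}^D\times(K^*)^{n-1}\twoheadrightarrow(K^*)^{n-1}$, and the resulting map is precisely the natural open embedding of the dense torus orbit $X'\cong(K^*)^{n-1}$ into $X$ (since the monomials $x_1^{h_1}\punkte x_{n-1}^{h_{n-1}}$ of the generating toric invariants, by construction, cut out the cone $\sigma$ whose dense orbit is $X'$). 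In particular $(p\circ i)^{-1}(x')\cong \textbf{A}^D$ for every $x'\in X'$.

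The main obstacle will be to upgrade this fibre computation from $i(\Ha_U)$ to the full quotient, i.e.~to prove $p^{-1}(x')\subseteq i(\Ha_U)$ for $x'\in X'$. The idea is that the distinguished single-row toric invariants $f_i$ of block sizes $(i),(1,\punkte,1)$ are all nonzero on any point of $p^{-1}(X')$, which forces the corresponding closed $U$-orbit to meet the open set $\N_U=U.\Ha_U$ (since vanishing of some $f_i$ would place the orbit outside the locus where all toric coordinates are nonzero). The explicit inverse $i'$ then places that orbit inside $i(\Ha_U)$. Once this inclusion is established, the factorization $p\circ i=\mathrm{pr}_2$ yields $p^{-1}(x')\cong \textbf{A}^D$, completing the proof.
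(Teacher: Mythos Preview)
Your overall strategy matches the paper's: dominance and $T$-equivariance are handled quickly, and the fibre computation reduces to the inclusion $p^{-1}(x')\subseteq i(\Ha_U)$ for $x'\in X'$, after which $\Ha_U\cong\textbf{A}^D\times(K^*)^{n-1}$ finishes. The gap is in your justification of that inclusion. You invoke the invariants $f_i$ of block sizes $(i),(1,\punkte,1)$ and claim their nonvanishing ``forces the corresponding closed $U$-orbit to meet $\N_U$'', but the parenthetical reason you give is the contrapositive of the wrong implication: saying that vanishing of some $f_i$ would place the orbit outside the locus of nonzero toric coordinates does not show that nonvanishing of all $f_i$ places the orbit \emph{inside} $\N_U$. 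Nothing in the paper links nonvanishing of the particular family $f_i$ to membership in $\N_U$, and the ``toric coordinates'' are not even defined away from $\Ha_U$, so the argument as written is circular.

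The paper closes this gap by choosing the toric invariants $\det_i(N)=\det\bigl(N^{n-i}_{(i,i)}\bigr)$ instead. These are toric (on $\Ha_U$ the submatrix $(H^{n-i})_{(i,i)}$ is lower triangular with diagonal depending only on the subdiagonal entries $x_k$), and their simultaneous nonvanishing is \emph{precisely} condition~(2) of Corollary~\ref{hnf} for $P=B$, which characterises the open set $\N_B=\N_U$. Hence for $x'\in X'$ every $\det_i$ is nonzero on $p^{-1}(x')$, and Section~\ref{gnfsect} then yields $p^{-1}(x')\subseteq i(\Ha_U)$ directly. Replacing your $f_i$ by the $\det_i$ and invoking Corollary~\ref{hnf} fixes the argument.
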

\begin{proof}
The morphism $p$ is clearly dominant and $T$-equivariant due to our considerations above. \\[1ex]
Let $x'\in X'$, then $p^{-1}(x)\subseteq i(\Ha_U)$, since every determinant ${\det}_i$ for $i\in\{1,\punkte,n-1\}$ is a toric invariant. If $x'\in X'$, none of these determinants vanishes on $x'$ and Section \ref{gnfsect}, therefore, yields $p^{-1}(x')\subseteq i(\Ha_U)$. Since the orbits in $\N_U$ are separated by certain $U$-invariants  and since $\Ha_U\cong \textbf{A}^D\times X'$, the claim  $p^{-1}(x)\cong \textbf{A}^D$ follows.
\end{proof}
 There is a morphism $q: \N\quot U\rightarrow \textbf{A}^D$ as well, such that the composition
\[\Ha_U\xrightarrow{i} \N\quot U\xrightarrow{q} \textbf{A}^D\]
yields $q\circ i(H)= (x_{i,j})_{1< j+1\leq i-1< n}\in \textbf{A}^D$.

\begin{lemma}
 The morphism
\[(q,p): \N\quot U\rightarrow \textbf{A}^D\times X\]
is dominant and birational.
\end{lemma}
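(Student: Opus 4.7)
The plan is to compute the composition $(q,p) \circ i$ explicitly on the generic normal form space $\Ha_U$, and then transport the conclusions back along the birational embedding $i: \Ha_U \hookrightarrow \N \quot U$ established just before the statement.

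Under the decomposition $\Ha_U \cong \textbf{A}^D \times (K^*)^{n-1}$, the $(K^*)^{n-1}$-factor records the nonzero sub-diagonal entries $x_1,\ldots,x_{n-1}$ of a normal form $H$, while the $\textbf{A}^D$-factor records the remaining strictly-lower-triangular entries $x_{i,j}$ with $i - j \geq 2$. By the defining formula $q\circ i(H)=(x_{i,j})_{1 < j+1 \leq i-1 < n}$, the first component $q \circ i$ is literally the projection onto $\textbf{A}^D$. For the second component I would use that every toric invariant $f$ satisfies $f(H) = f(H_{\tor})$, so its restriction to $\Ha_U$ is a monomial $x_1^{h_1}\cdots x_{n-1}^{h_{n-1}}$ in the sub-diagonal entries alone. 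Hence $p \circ i$ factors as the projection $\Ha_U \to (K^*)^{n-1}$ followed by the canonical torus embedding $(K^*)^{n-1} \hookrightarrow X$, whose image is the dense torus orbit $X'$.

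Combining the two components, $(q,p) \circ i$ identifies with the open immersion
\[
\textbf{A}^D \times (K^*)^{n-1} \;=\; \textbf{A}^D \times X' \;\hookrightarrow\; \textbf{A}^D \times X.
\]
Dominance of $(q,p)$ is then immediate, since $\textbf{A}^D \times X'$ is dense open in $\textbf{A}^D \times X$. For birationality I would invoke that $i:\Ha_U \to i(\Ha_U)$ is an isomorphism onto its image (the explicit inverse $i'$ is constructed right before the previous lemma) and that $i(\Ha_U) = \pi(\N_U)$ is open in $\N\quot U$, since $\N_U = U.\Ha_U$ is a $U$-saturated open subset of $\N$. Therefore $(q,p)$ restricts to an isomorphism from the dense open subset $i(\Ha_U)\subseteq \N\quot U$ onto the dense open subset $\textbf{A}^D \times X' \subseteq \textbf{A}^D \times X$, which is exactly what birationality asserts.

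The main obstacle I expect is pinning down the image of $\Ha_U \to X$ as \emph{precisely} the dense orbit $X'$, rather than some smaller torus orbit of $X$. This reduces to checking that the cone $\sigma$ has full dimension $n-1$, i.e.\ that the exponent tuples of a minimal set of generating toric invariants span $\textbf{R}^{n-1}$. I would verify this by exhibiting $n-1$ linearly independent such tuples: the determinants $\det_i$ from Section \ref{Uquot} are toric invariants, and a direct computation via Lemma \ref{accperm} shows that their exponent vectors form an independent family in $\textbf{Z}^{n-1}$. Everything else is formal bookkeeping on top of the preceding lemma.
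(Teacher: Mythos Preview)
Your proposal is correct and follows essentially the same route as the paper: both arguments establish that $(q,p)$ restricts to an isomorphism from the dense open subset $i(\Ha_U)\subseteq \N\quot U$ onto the dense open subset $\textbf{A}^D\times X'\subseteq \textbf{A}^D\times X$, and deduce dominance and birationality from this. Your write-up is simply more explicit about why this restriction is an isomorphism; the ``obstacle'' you flag is already handled in the paper's setup, where $\sigma$ is asserted to be a strongly convex rational polyhedral cone of maximal dimension and $X'\cong (K^*)^{n-1}$ is identified with the dense torus orbit.
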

\begin{proof}
 The morphism $(p,q)$ is dominant, since  $\textbf{A}^D\times X'\subseteq \im(p,q)\subseteq \textbf{A}^D\times X$.\\[1ex]
The morphism $(p,q)$ is birational, since $(p,q)$ is dominant and generically injective: the fibre $(p,q)^{-1}(y)$ contains exactly one element for every $y\in \textbf{A}^D\times X'$, since the $U$-orbits can be separated in $\textbf{A}\times X'$. More straight forward, $(p,q)$ restricts to an isomorphism $i(\Ha_U)\cong \textbf{A}^D\times X'$.
\end{proof}
Note that the morphism $(p,q)$ is not surjective for $n\geq 4$. Even in the case $n=4$, we can show $K[\N]^U\ncong K[\textbf{A}^3]\otimes K[\N]^U_{\tor}$ and $\N\quot U\ncong \textbf{A}^3\times X$.\\[1ex]
We define a $U$-invariant $g$ by the data 
\[P=\left\lbrace 
\begin{array}{ll}
((2),(2), (x)), &~\textrm{if}~n=4; \\ 
((n-2),(2,n-4), (x,x^4)) &~\textrm{otherwise}.
\end{array}
\right. \]   
Then 
$g(H)=(x_{3,1}\cdot x_{4,2}-x_2\cdot x_{4,1})\cdot \Det_{n-4}(H)$
and the relation 
\[g\cdot \underbrace{\Det_{n-3}\cdot\Det_1\cdot f_{n-3}\cdot f_{n-1}}_{\coloneqq F} =\underbrace{f_{3,1}\cdot f_{4,2}\cdot f_{n-3} \cdot f_{n-1} - f_{4,1}\cdot f_{n-2}^2 \cdot \Det_{n-3}\cdot \Det_1}_{\coloneqq F'}\]
holds true in $K[\N]^U$. The set 
$M\coloneqq \{\underline{x}\in\textbf{A}^D\times X\mid F(\underline{x})= 0; F'(\underline{x})\neq 0\}$ is non-empty and 
 the inclusion $M \subseteq (\textbf{A}^D\times X)\backslash \im(p,q)$ directly yields that the morphism $(p,q)$ is not surjective.

\section{Towards a GIT-quotient for the Borel-action}\label{Borelquot} 
We initiate the study of a GIT-quotient for the Borel action on  $\N$ and start by discussing $n=2$.
\begin{example}\label{Btwo}
 Example \ref{Utwo} proves $K[\N]^U=K[f_{2,1}].$
The $U$-invariant morphism $f_{2,1}$ is a $B$-semi-invariant of weight $\chi_{0}\coloneqq\omega_2 -\omega_1$. Therefore,
\[\bigoplus_{\chi\in X(B)}\bigoplus_{n\geq 0}K[\N]^{B,n\chi}~=~\bigoplus_{n\geq 0}K[\N]^{B,n\chi_{0}}.\]
Of course, $N\in\N_B$ if and only if $f_{2,1}(N)\neq 0$ and therefore $\N^{\chi_{0}-\sst}=\N_B$.\\[1ex]
The morphism 
 \begin{align}
\mu\colon& ~\N^{\chi_{0}-\sst}\rightarrow \Proj K[f_{2,1}]=\{1\}; ~~~N \mapsto f_{2,1}(N)=1,  \nonumber
\end{align}
thus, is a GIT-quotient.
\end{example}

\begin{example}\label{Bthree}
Let us consider $n=3$.
Example \ref{Uthree} proves \[K[\N]^U=K[f_{3,1},f_{1},f_{2},\Det_1]/\left( f_1\cdot f_2= \Det_1^3\right).\]
We consider these $U$-invariants:
\begin{enumerate}
 \item  $f_{3,1}$ and $\Det_1$ are $B$-semi-invariants of weight $\chi_{3,1}\coloneqq\omega_3 -\omega_1$,
\item  $f_1$ is a $B$-semi-invariant of weight $\chi_{1}\coloneqq -2\omega_1+\omega_2+\omega_3$ and
\item  $f_2$ is a $B$-semi-invariant of weight $\chi_{2}\coloneqq -\omega_1-\omega_2+2\omega_3$.
\end{enumerate}
The equality $\Det_1=\Det_2$ holds true on $\N$, therefore $\N^{\chi_{3,1}-\sst}=\N_B\cup \{N\in\N\mid N_{3,1}\neq 0\}$.\\[1ex]
Thus, the morphism 
 \begin{align}
\mu\colon& ~\N^{\chi_{3,1}-\sst}\rightarrow \mathbf{P}^1=\Proj K[f_{3,1},\Det_1];~~~N \mapsto (f_{3,1}(N):\Det_1(N)) \nonumber
\end{align}
is a GIT-quotient.
\end{example}

\subsection{Generic separation of the same weight}\label{Bgensep}
We define the character
\[\chi\coloneqq\sum_{i=1}^{n-1}(\omega_{n-i+1}+\ldots+\omega_n) - \sum_{i=1}^{n-1}(\omega_1+\ldots+\omega_i).\]
and show how to extract the entries of the normal forms $H$ in the affine space $\Ha_B\cong\textbf{A}^D$ of dimension $D\coloneqq\frac{(n-1)(n-2)}{2}$ with the generating semi-invariants from Proposition \ref{semiprop}. In particular, we are able to separate them with semi-invariants of the same weight $\chi$.
\begin{lemma}\label{extract}
For each $i$ and $j$, such that $2<j+2\leq i\leq n$, there is a semi-invariant $g_{i,j}$ of weight $\chi$ which fulfills \[g_{i,j}(H)=H_{i,j}\]
for every normal form $H\in\Ha_B$.
\end{lemma}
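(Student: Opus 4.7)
The plan is to construct $g_{i,j}$ explicitly as a $K$-linear combination of products of the determinantal semi-invariants $f^{\Pa}$ from Proposition \ref{semiprop}. First I would identify a convenient ``unit'' semi-invariant of weight $\chi$: for each $k\in\{1,\ldots,n-1\}$ the function $D_k:=\det\bigl((N^{n-k})_{(k,k)}\bigr)$ is a $B$-semi-invariant of weight $w_k:=\omega_{n-k+1}+\cdots+\omega_n-\omega_1-\cdots-\omega_k$ (it is $f^{\Pa}$ for $(a)=(a')=(k)$ and $P_{1,1}(x)=x^{n-k}$). By Corollary \ref{hnf}, $(H^{n-k})_{(k,k)}$ is lower triangular with ones on its diagonal for $H\in\Ha_B$, so $D_k(H)=1$; and since $\sum_{k=1}^{n-1}w_k=\chi$, the product $D_1\cdots D_{n-1}$ is a weight-$\chi$ semi-invariant that restricts to the constant $1$ on $\Ha_B$.

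Second, for each $(i,j)$ I would build an ``entry extractor''. Because $(n,1)$ is the unique matrix position on which upper-triangular conjugation acts diagonally, $(N^p)_{n,1}$ is a $B$-semi-invariant of weight $w_1=\omega_n-\omega_1$ for every $p$. A direct path count in the normal form shows
\[
(H^{n-i+j})_{n,1}\;=\;\sum_{\substack{(i',j')\\ i'-j'=i-j}}H_{i',j'}\;+\;\mathcal{Q}_{i,j}(H),
\]
where the linear part sums the free entries on the diagonal $i-j$ (so $H_{i,j}$ appears with coefficient $1$), and $\mathcal{Q}_{i,j}$ gathers the monomials of degree ${\geq}2$ produced by multi-jump paths. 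Multiplying by $D_2\cdots D_{n-1}$ produces a weight-$\chi$ semi-invariant $G_{i,j}$ with the same restriction to $\Ha_B$.

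Third, I would isolate $H_{i,j}$ by induction on pairs $(i,j)$, for instance in lexicographic order on $(i-j,\,j)$. The base case is the corner $(n,1)$: $g_{n,1}:=N_{n,1}\cdot D_2\cdots D_{n-1}$ works directly because $N_{n,1}$ itself is a semi-invariant of weight $w_1$. In the inductive step I would subtract from $G_{i,j}$ the previously constructed $g_{i',j'}$ for the other $(i',j')$ on the diagonal $i-j$ (to kill the remaining linear contributions) together with auxiliary weight-$\chi$ semi-invariants $f^{\Pa}$ whose restrictions single out the individual monomials of $\mathcal{Q}_{i,j}$ (to kill the higher-degree terms); the existence of such auxiliaries is guaranteed by Theorem \ref{genersemi}.

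The main obstacle will be the last step: verifying that every monomial appearing in $\mathcal{Q}_{i,j}$ really is the restriction of some weight-$\chi$ semi-invariant, so that the induction closes. Concretely, for each multi-jump monomial $\prod_\ell H_{p_\ell,q_\ell}$ one needs a datum $\Pa$ of weight $\chi$ whose block determinant evaluates on $\Ha_B$ to that monomial modulo terms already handled inductively. A natural candidate is to take block sizes forming permutations of $(1,\ldots,n-1)$ on both sides and to choose the polynomials $P_{k,l}$ so that $N^{\Pa}(H)$ is anti-block-diagonal with exactly one anti-diagonal block modified to encode the target monomial via a path in a suitable power of $N$; showing that this can be done uniformly across all monomials occurring in the $\mathcal{Q}_{i,j}$ is the combinatorial heart of the argument.
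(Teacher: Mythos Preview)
Your plan has a structural gap that prevents the induction from getting off the ground. The ``entry extractor'' you build, $G_{i,j}=(N^{n-i+j})_{n,1}\cdot D_2\cdots D_{n-1}$, depends only on the difference $i-j$, so $G_{i,j}|_{\Ha_B}=G_{i',j'}|_{\Ha_B}$ for all $(i',j')$ on the same diagonal. Hence for the \emph{first} pair on a given diagonal in your lex order $(i-j,j)$ there are no previously constructed $g_{i',j'}$ to subtract, yet the linear part of $G_{i,j}|_{\Ha_B}$ still contains all the other $H_{i',j'}$ on that diagonal. Separating these requires precisely the semi-invariants you are trying to build, so the scheme is circular. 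The appeal to Theorem~\ref{genersemi} does not rescue this: that theorem says the $f^{\Pa}$ span the semi-invariant ring, not that every polynomial function on $\Ha_B$ (in particular each monomial of $\mathcal{Q}_{i,j}$, or each individual $H_{i',j'}$) is the restriction of a weight-$\chi$ semi-invariant. What remains after your reduction---``take block sizes forming permutations of $(1,\ldots,n-1)$ and choose the $P_{k,l}$ so that $N^{\Pa}(H)$ encodes the target''---is the entire content of the lemma, now made harder because you would need such constructions for all higher-degree monomials rather than just the linear ones.

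The paper avoids all of this by producing, for each $(i,j)$, a single explicit datum $\Pa$ with both block-size tuples equal to a permutation of $(1,\ldots,n-1)$ (hence automatically of weight~$\chi$) and with polynomials chosen so that $H^{\Pa}$ is block lower-triangular with all diagonal blocks unitriangular except for one $3\times 3$ (or $2\times 2$) corner that picks off $H_{i,j}$ directly; the computation $f^{\Pa}(H)=H_{i,j}$ is then a one-line determinant evaluation. There is no induction, no linear combination, and no auxiliary semi-invariants. Your final paragraph correctly identifies this as the right shape of construction; the point is that carrying it out for the linear coordinates $H_{i,j}$ themselves \emph{is} the proof, and the detour through $(N^p)_{n,1}$ only adds obligations it cannot discharge.
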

\begin{proof}

Let $n-i+1 \notin\{j-1, j\}$ and
define the datum $\Pa\coloneqq((a_k)_k,(a'_k)_k,(P_{k,l})_{k,l})$ by
\begin{itemize}
 \item[$\cdot$] $(a_k)_{1\leq k\leq n-1}\coloneqq(j-1,n-i+1,j,1,\ldots,j-2,j+1,\ldots,n-i,n-i+2,\ldots,n-1)$,
\item[$\cdot$] $(a'_k)_{1\leq k\leq n-1}\coloneqq(j,n-i+1,j-1,1,\ldots,j-2,j+1,\ldots,n-i,n-i+2,\ldots, n-1)$,
\item[$\cdot$] $P_{k,l}\coloneqq\left\lbrace 
\begin{array}{ll}
x^{n-j+1}, &~\textrm{if}~k=l\in\{1,3\}; \\ 
x & ~\textrm{if}~k=2~\textrm{and}~l=1; \\ 
x^i & ~\textrm{if}~k=l=2; \\ 
x^{i-j} & ~\textrm{if}~k=3~\textrm{and}~l=2;\\
x^{n-a_k} & ~\textrm{if}~k=l>3;\\
0 & ~\textrm{otherwise}.
                        \end{array}\right.$
\end{itemize}
Let us denote $g_{i,j}\coloneqq f^{\Pa}$ and let $H\in\Ha_B$, then\\[1ex]
$g_{i,j}(H) =  \det(H^{\Pa}) = \det((P_{k,l}(H)_{(a_k,a'_l)})_{1\leq k,l\leq 3}) \cdot \det((P_{k,l}(H)_{(a_k,a'_l)})_{4\leq k,l\leq n-1}) =$\\[1ex]
$=  \det((P_{k,l}(H)_{(a_k,a'_l)})_{1\leq k,l\leq 3}) \cdot \prod_{k=4}^{n-1}\det(P_{k,k}(H)_{(a_k,a'_k)}) = \det((P_{k,l}(H)_{(a_k,a'_l)})_{1\leq k,l\leq 3}) =$\\[1ex]
$ = \det \left(\begin{array}{l|l|l}
\begin{array}{cccc}1&&0&0\\ &\ddots&&\vdots\\ *&&1&0\end{array} &~~~~~~~~ 0 &~~~~~~~ 0 \\ \hline
~~H_{(n-i+1,j)} & \begin{array}{cccc}0&&&0\\ 1&&&\\ &\ddots&&\\ *&&1&0   \end{array} & ~~~~~~~0 \\ \hline
~~~~~~~~0 & \begin{array}{cccc}*&\cdots&*&1\\ *&\cdots&*&*\\ \vdots&&\vdots&\vdots \\  *&\cdots&*&* \end{array} & \begin{array}{ccc}0&\cdots&0\\   1&&0\\ &\ddots&\\ *&&1\end{array}
          \end{array}\right)  = H_{i,j}.$\\[1ex]
In the remaining cases, the argumentation is the same as in this first case:\\[1ex]
If $n-i+1=j$, then we define the datum $\Pa\coloneqq((a_k)_k,(a'_k)_k,(P_{k,l})_{k,l})$ by
\begin{itemize}
 \item[$\cdot$] $(a_k)_{1\leq k\leq n-1}\coloneqq(j-1,j,1,\ldots,j-2,j+1,\ldots, n-1)$,
\item[$\cdot$] $(a'_k)_{1\leq k\leq n-1}\coloneqq(j,j-1,1,\ldots,j-2,j+1,\ldots, n-1)$,
\item[$\cdot$] $P_{k,l}\coloneqq\left\lbrace 
\begin{array}{ll}
x^{n-j+1}, &~\textrm{if}~k=l\in\{1,2\}; \\ 
x & ~\textrm{if}~k=2~\textrm{and}~l=1; \\  
x^{n-a_k} & ~\textrm{if}~k=l>2;\\
0 & ~\textrm{otherwise}.
                        \end{array}\right.$
\end{itemize}
Let $n-i+1=j-1$ and $j=2$, that is, $i=n$. We define the datum $\Pa=((a_k)_k,(a'_k)_k,(P_{k,l})_{k,l})$ as follows:
\begin{itemize}
 \item[$\cdot$] $(a_k)_{1\leq k\leq n-1}\coloneqq(2,1,3,\ldots,n-1)$,
\item[$\cdot$] $(a'_k)_{1\leq k\leq n-1}\coloneqq(1,2,3,\ldots,n-1)$,
\item[$\cdot$] $P_{k,l}\coloneqq\left\lbrace 
\begin{array}{ll}
x^{n-2}, &~\textrm{if}~k=l=1; \\ 
x^{n-1} & ~\textrm{if}~k=1~\textrm{and}~l=2; \\
x & ~\textrm{if}~k=l=2; \\  
x^{n-k} & ~\textrm{if}~k=l>2;\\
0 & ~\textrm{otherwise}.
                        \end{array}\right.$
\end{itemize}
Let us assume $n-i+1=j-1$ and $j\geq 3$ and consider the datum $\Pa=((a_k)_k,(a'_k)_k,(P_{k,l})_{k,l})$ defined by
\begin{itemize}
 \item[$\cdot$] $(a_k)_{1\leq k\leq n-1}\coloneqq(j,j-1,1,\ldots,j-2,j+1,\ldots, n-1)$,
\item[$\cdot$] $(a'_k)_{1\leq k\leq n-1}\coloneqq(1,j,j-1,2\ldots,j-2,j+1,\ldots, n-1)$,
\item[$\cdot$] $P_{k,l}\coloneqq\left\lbrace 
\begin{array}{ll}
x^{n-j+1} & ~\textrm{if}~(k=1~\textrm{and}~l=2)~\textrm{or~if}~k=l=3; \\ 
x & ~\textrm{if}~k=l=2; \\ 
x^{n-j+2} & ~\textrm{if}~k=2~\textrm{and}~l=3;\\
x^{n-a_k} & ~\textrm{if}~k=l=1~\textrm{or~if}~k=l>3;\\
0 & ~\textrm{otherwise}.
                        \end{array}\right.$
\end{itemize}
It follows from Proposition  \ref{semiprop} that every such semi-invariant $g_{i,j}$ is of weight $\chi$.\qedhere
\end{proof}
We have, thus, found semi-invariants of the same character that extract the coordinates of $\Ha_B\cong \textbf{A}^{D}$. 
As the translation to the representation theory of the algebra $K\Q/I$ provides an insight into the classification of finite parabolic actions in case the algebra is representation-finite (see \cite{BoRe}), the translation to the language of moduli spaces may provide further knowledge about quotients if the algebra is of wild representation type.

\end{document}